\title{Refined bounds for randomized experimental design}
\author{Geovani Rizk\\PSL - Universit\'e Paris Dauphine,\\ CNRS, LAMSADE, Paris, France. \\ Huawei Noah's Ark Lab  \And Igor Colin\\Huawei Noah's Ark Lab \AND Albert Thomas\\Huawei Noah's Ark Lab \And Moez Draief\\Capgemini, Paris, France}
\begin{document}

\theoremstyle{plain}
\newtheorem{theorem}{Theorem}[section]
\newtheorem{lemma}[theorem]{Lemma}
\newtheorem{prop}[theorem]{Proposition}
\newtheorem*{corollary}{Corollary}
\theoremstyle{definition}
\newtheorem{definition}{Definition}[section]
\newtheorem{conj}{Conjecture}[section]
\newtheorem{example}{Example}[section]
\theoremstyle{remark}
\newtheorem*{remark}{Remark}
\newtheorem*{note}{Note}
\newtheorem{case}{Case}

\maketitle

\begin{abstract}
  Experimental design is an approach for selecting samples among a given set so as to obtain the best estimator for a given criterion. In the context of linear regression, several optimal designs have been derived, each associated with a different criterion: mean square error, robustness, \emph{etc}. Computing such designs is generally an NP-hard problem and one can instead rely on a convex relaxation that considers probability distributions over the samples. Although greedy strategies and rounding procedures have received a lot of attention, straightforward sampling from the optimal distribution has hardly been investigated. In this paper, we propose theoretical guarantees for randomized strategies on E and G-optimal design. To this end, we develop a new concentration inequality for the eigenvalues of random matrices using a refined version of the intrinsic dimension that enables us to quantify the performance of such randomized strategies. Finally, we evidence the validity of our analysis through experiments, with particular attention on the G-optimal design applied to the best arm identification problem for linear bandits.
\end{abstract}

\section{Introduction}
\label{sec:intro}

Experimental designs consist in the selection of the best samples or \emph{experiments} for the estimation of a given quantity. A well-known and extensively studied example is the one of the ordinary least squares (OLS) estimator in the linear regression setting. The OLS estimator being unbiased, which experiments must be chosen in a fixed pool of experiments so as to minimize its variance? In the multi-dimensional case, this is done by minimizing a scalar function of its covariance matrix and several approaches have been considered such as the minimization of the determinant, the trace or the spectral norm, respectively denoted D, A and E-optimal design (see \eg \citep{Pukelsheim2006, SagnolPhd2010}). (See Appendix 1, for more details on experimental design)

E-optimal design has been exploited in practical settings such as for biological experiments \citep{FlahertyRobustDesignNIPS2006} or for treatment versus control comparisons where useful statistical interpretations have been derived, see \eg \citep{Notz1985EOptimalDesign, Rosa2017EOptimalDesign}. Another criterion, known as G-optimal design and which minimizes the worst predicted variance, has recently been investigated in the context of \emph{best arm identification} in linear bandits \citep{SoareNIPS2014,TaoICML2018,XuAISTATS2018} where one is interested in finding the experiment with maximum linear response.


The optimization problems associated to the aforementioned optimal designs (E, A, D, G) are known to be NP-hard \citep{CivrilNPHard,WelchNPHard}. The two common approaches have been to resort to greedy strategies or convex relaxations. A greedy strategy iteratively finds the best experiment whereas solving a convex relaxation returns a discrete probability distribution over the samples. 
On the one hand, performance guarantees for greedy strategies have been obtained by exploiting supermodularity and approximate supermodularity properties of the different criteria \citep{Sagnol2013,chamon2017approximate,SoareNIPS2014}. On the other hand, for performance guarantees of randomized optimal designs, only the randomized A-optimal design has been thetoretically studied with bounds on the mean square error of the associated estimator (\citep{wang2017computationally}). 


We propose in this paper to fill the gap concerning randomized E and G-optimal designs. More precisely we study their theoretical validity by providing finite-sample confidence bounds and show with experiments that they are worth being considered in practice. The paper is organized as follows. Section \ref{sec:preliminaries} defines the main notations and recalls the problem of experimental design as well as the different optimal criteria. Section \ref{sec:convergence-analysis} presents the main results of this paper for the random strategies of E and G-optimal designs. Finally, the last section shows empirical results of the studied random strategies and an application to the best arm identification problem in linear bandits.

\section{Preliminaries}
\label{sec:preliminaries}

\subsection{Definitions and notations}
\label{sec:def}

Throughout the paper, we use small bold letters for vectors (\eg $\mathbf{x}$) and capital bold letters for matrices (\eg $\mathbf{X}$). For any $d > 0$ and any vector $\mathbf{x} \in \bbR^d$, $\| \mathbf{x} \|$ will denote the usual $\ell_2$-norm of $\mathbf{x}$. For any square matrix $\mathbf{X} \in \bbR^{d \times d}$, we denote as $\|\mathbf{X}\|$ the spectral norm of $\mathbf{X}$, that is $\| \mathbf{X} \| \triangleq \sup_{\mathbf{y}: \|\mathbf{y}\|=1} \| \mathbf{X}\mathbf{y}\|$.
We let $\lambda_{\mathrm{min}}(\mathbf{X})$ be the smallest eigenvalue of $\mathbf{X}$.
For any $1 \leq i, j \leq d$, any $\mathbf{x} \in \bbR^d$ and any matrix $\mathbf{X} \in \bbR^{d \times d}$, $[\mathbf{x}]_i$ denotes the $i$-th coordinate of vector $\mathbf{x}$, $[\mathbf{X}]_i$ the vector of the $i$-th row and $[\mathbf{X}]_{ij}$ the value at the $i$-th row and $j$-th column. Finally, we denote by $\mathbf{S}^+_d$ the cone of all $d \times d$ positive semi-definite matrices and by $\Delta_d \triangleq \{ \mu \in [0, 1]^d \text{, } \sum_{i = 1}^d [\mu]_i = 1 \}$ the simplex in $\bbR^d$.

\subsection{Experimental design for linear regression}
\label{sec:experimental-design}

Given $\mathbf{X} \in \mathbb{R}^{K\times d}$ a matrix of $K$ experiments\footnote{In the remaining of this paper, we consider a finite set of experiments; some results could be easily transposed to a continuous setting.} and  $\mathbf{y} \in \mathbb{R}^{K}$ a vector of $K$ measurements, it is assumed that there exists an unknown parameter $\theta_{\star} \in \mathbb{R}^{d}$ such that for all $k \in \{1, \dots, K\}$,  $[\mathbf{y}]_k = \theta_{\star}^{\top}\mathbf{x}_k + \varepsilon_k$ where $\mathbf{x}_k = [\mathbf{X}]_k$ and $\varepsilon_1, \dots, \varepsilon_K$ are independent Gaussian random variables with zero mean and variance $\sigma^{2}$. The ordinary least squares (OLS) estimator of the parameter $\theta_{\star}$ is given by $\hat{\theta} = \argmin_{\theta} \|\mathbf{y}-\mathbf{X}\theta\|_2^2 = (\mathbf{X}^{\top}\mathbf{X})^{-1}\mathbf{X}^{\top}\mathbf{y}$.\footnote{We assume that the experiments span $\bbR^d$ so that the matrix $\mathbf{X}^{\top}\mathbf{X}$ is non singular. If this is not the case then we may project the data onto a lower dimensional space.} This estimator is unbiased and has a covariance matrix $\mathbf{\Sigma}^{-1} = \sigma^{2} (\mathbf{X}^{\top}\mathbf{X})^{-1}$.


Experimental design \citep{Pukelsheim2006} consists in estimating $\hat{\theta}$ by selecting only the experiments that are the most statistically efficient to reduce the variance.

More formally, let $n$ be the total number of selected experiments and for all $k \in \{1, \dots, K\}$, let $n_k$ be the number of times $\mathbf{x}_k$ is chosen. We have $n_k \geq 0$ and $\sum_{i=1}^{K} n_k = n$. The covariance matrix obtained with such a design can be written as $\mathbf{\Sigma}_D^{-1} = \sigma^{2} (\sum_{k=1}^{K} n_k \mathbf{x}_k \mathbf{x}_k^{\top})^{-1}$. The Loewner order on $\mathbf{S}^+_d$ being only a partial order, minimizing $\mathbf{\Sigma}_D^{-1}$ over the cone $\mathbf{S}^+_d$ is an ill-posed problem. An optimal design is thus defined thanks to scalar properties of a matrix in $\mathbf{S}^+_d$, \ie as a solution of $\min_{n_1, ..., n_K} f(\mathbf{\Sigma}_D^{-1})$ where $f : \mathbf{S}^+_d \rightarrow \mathbb{R}$. 
The two criterion $f$ we study in this paper are:
\begin{itemize}
\item \emph{E}-\textbf{optimality} : $f_E(\mathbf{\Sigma}_D^{-1}) = \|\mathbf{\Sigma}_D^{-1}\|$.
The \emph{E}-optimal design minimizes the maximum eigenvalue of $\mathbf{\Sigma}_D^{-1}$. Geometrically it minimizes the variance ellipsoid in the direction of its diameter only.
\item \emph{G}-\textbf{optimality} : $f_G(\mathbf{\Sigma}_D^{-1}) = \max_{\mathbf{x} \in \mathcal{X}} \,\mathbf{x}^{\top}\mathbf{\Sigma}_D^{-1}\mathbf{x}$. The \emph{G}-optimal design minimizes the worst possible predicted variance.
\end{itemize}

Those two optimality criteria are NP-hard optimization problems \citep{CivrilNPHard,WelchNPHard}. However approximate solutions can be found in polynomial time by relying on greedy strategies or by relaxing the problem and looking for proportions $\mu_k \in [0,1]$ instead of integers $n_k$. By letting $\mu_k = n_k/n$, the covariance matrix $\mathbf{\Sigma}_D^{-1}$ can be written as $\mathbf{\Sigma}_D^{-1} = \sigma^{2}/n \cdot (\sum_{k=1}^{K} \mu_k \mathbf{x}_k \mathbf{x}_k^{\top})^{-1}$ and it leads us to the convex optimiation problem $\min_{\mu_1, ..., \mu_K \in [0,1]} f(\mathbf{\Sigma}_D^{-1})$ which returns a discrete probability distribution over the samples. 
For more details on experimental design and optimal design criteria see \citep{boyd2004convex,Pukelsheim2006}.

\section{Convergence analysis}
\label{sec:convergence-analysis}

In this section, we analyze the behavior of random sampling along the distribution associated to the convex relaxation discussed in Section~\ref{sec:preliminaries}, for E and G-optimal designs. Let $\mathcal{X} = \{\mathbf{x}_1, \ldots, \mathbf{x}_K \} \subseteq \bbR^d$ be the set of experiments and let $\mu_E^{\star}$ and $\mu_G^{\star}$ be the optimal distributions in $\Delta_K$ associated to the convex relaxation of such designs. For any $\mu \in \Delta_K$, we denote as $\mathbf{M}(\mu)$ the matrix $\mathbf{M}(\mu) \triangleq \sum_{k = 1}^K \mu_k \mathbf{x}_k \mathbf{x}_k^{\top}$ and $f_{G, n}^{\star} \triangleq f_G\big((n \mathbf{M}(\mu_G^{\star}))^{-1}\big)$ as the objective at the optimum $\mu_G^{\star}$ for a sample size $n$.

\begin{theorem}
  \label{thm:big-result}
  Let $\mathcal{X} = \{\mathbf{x}_1, \ldots, \mathbf{x}_K\}$ be a set of experiments and let $\mu^{\star}_E$ be the solution of the relaxation of the E-optimal design. Let $0 \leq \delta \leq 1$ and let $n$ such that
  \[
    n \geq 2L \big\|\mathbf{M}(\mu_E^{\star})^{-1}\big\| \log(d/\delta),
  \]
  where $L = \max_{x \in \mathcal{X}} \|x\|^2$.
  Then, with probability at least $1 - \delta$, one has
  \[
    f_E\big(\mathbf{S}_{E, n}^{-1}\big)  \leq \left(1 +  \frac{1}{\sqrt{\displaystyle\frac{n}{2L\|\mathbf{M}(\mu_E^{\star})^{-1}\|\log(d / \delta)} } - 1}\right) f_{E, n}^{\star},
  \]
  where $\mathbf{S}_{E, n}$ is the sum of $n$ i.i.d. random matrices drawn from $\mu_E^{\star}$.

  Similarly, let $\mu_G^{\star}$ be the solution of the relaxed G-optimal design and $\mathbf{S}_{G, n}$ the associated sample sum. One has, with probability at least $1 - 2\delta$,
  \[
    f_G\big(\mathbf{S}_{G, n}^{-1}\big)  \leq \left(1 +  \frac{L}{d} \|\mathbf{M}(\mu^{\star}_G)^{-1}\|^2 \sqrt{\frac{2\sigma^2}{n} \log(d/\delta)} + o\left(\frac{1}{\sqrt{n}}\right)\right) f_{G, n}^{\star},
  \]
  with $\sigma^2 \triangleq L^2 \sum_{k = 1}^K [\mu_{G}^{\star}]_k(1 - [\mu_{G}^{\star}]_k)$.
\end{theorem}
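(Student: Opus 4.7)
I would handle the two parts separately, as they rely on different concentration tools.

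For the E-optimal bound, note that $\mathbf{S}_{E,n}$ is a sum of $n$ i.i.d.\ rank-one PSD matrices with spectral norm bounded by $L$ and expectation $n\mathbf{M}(\mu_E^\star)$. The matrix Chernoff lower tail directly yields, with probability at least $1-\delta$, $\lambda_{\min}(\mathbf{S}_{E,n}) \geq (1-\varepsilon)\,n\,\lambda_{\min}(\mathbf{M}(\mu_E^\star))$ with $\varepsilon = \sqrt{2L\log(d/\delta)/(n\,\lambda_{\min}(\mathbf{M}(\mu_E^\star)))}$. The hypothesis on $n$ is equivalent to $\varepsilon \leq 1$; taking reciprocals, using $\lambda_{\min}(\mathbf{M}(\mu_E^\star))^{-1} = \|\mathbf{M}(\mu_E^\star)^{-1}\|$, and rewriting $(1-\varepsilon)^{-1} = 1 + 1/(\varepsilon^{-1}-1)$ produces the advertised multiplicative correction for $f_E(\mathbf{S}_{E,n}^{-1}) = 1/\lambda_{\min}(\mathbf{S}_{E,n})$.

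For the G-optimal bound, set $\mathbf{A} \triangleq n\mathbf{M}(\mu_G^\star)$ and expand $\mathbf{S}_{G,n}^{-1}$ to first order around $\mathbf{A}^{-1}$ by a Neumann series: on the event $\|\mathbf{A}^{-1}(\mathbf{S}_{G,n}-\mathbf{A})\|<1$,
\[
\mathbf{x}^\top \mathbf{S}_{G,n}^{-1}\mathbf{x} \;=\; \mathbf{x}^\top \mathbf{A}^{-1}\mathbf{x} \;+\; \mathbf{v}^\top(\mathbf{A}-\mathbf{S}_{G,n})\mathbf{v} \;+\; R(\mathbf{x}), \qquad \mathbf{v}\triangleq\mathbf{A}^{-1}\mathbf{x},
\]
with $|R(\mathbf{x})| = O(\|\mathbf{A}^{-1}\|^3\|\mathbf{S}_{G,n}-\mathbf{A}\|^2\|\mathbf{x}\|^2)$. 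A preliminary matrix Bernstein bound on $\mathbf{S}_{G,n}-\mathbf{A}$, yielding $\|\mathbf{S}_{G,n}-\mathbf{A}\| = O(\sqrt{n\log(d/\delta)})$ with probability $1-\delta$, makes this series converge and forces $R(\mathbf{x})/f_{G,n}^\star$ to be $o(1/\sqrt{n})$. The Kiefer--Wolfowitz equivalence theorem then provides $f_{G,n}^\star = d/n$ together with $\mathbf{x}^\top \mathbf{A}^{-1}\mathbf{x} \leq f_{G,n}^\star$ for every $\mathbf{x} \in \mathcal{X}$, so only the linear perturbation term is left to control.

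For that linear term, writing $N_k$ for the multinomial sample counts one has $\mathbf{v}^\top(\mathbf{A}-\mathbf{S}_{G,n})\mathbf{v} = \sum_k (n[\mu_G^\star]_k - N_k)(\mathbf{v}^\top\mathbf{x}_k)^2$. Combining the multinomial covariance with $(\mathbf{v}^\top\mathbf{x}_k)^2\leq L\|\mathbf{v}\|^2$ gives a variance proxy $n\|\mathbf{v}\|^4\sigma^2$ for exactly the $\sigma^2 = L^2\sum_k [\mu_G^\star]_k(1-[\mu_G^\star]_k)$ appearing in the statement. A scalar Bernstein inequality, the crude bound $\|\mathbf{v}\|^2 \leq L\|\mathbf{M}(\mu_G^\star)^{-1}\|^2/n^2$ (since $\mathbf{x}\in\mathcal{X}$), and normalization by $f_{G,n}^\star = d/n$ together produce the $L/d$ prefactor and the $\sqrt{2\sigma^2\log(d/\delta)/n}$ deviation; unioning the two high-probability events explains the $1-2\delta$ confidence level. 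The main obstacle I anticipate is replacing what would naively be a $\log K$ union bound over $\mathcal{X}$ by the $\log d$ that appears in the statement: the sharpening must come from the refined intrinsic-dimension concentration inequality announced in the abstract, which controls the $\max_{\mathbf{x}\in\mathcal{X}}$ through an effective $d$-dimensional quantity rather than through the cardinality of $\mathcal{X}$.
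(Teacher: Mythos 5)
Your E-optimal argument is exactly the paper's: matrix Chernoff lower tail on $\lambda_{\mathrm{min}}(\mathbf{S}_{E,n})$, the identity $\lambda_{\mathrm{min}}(\mathbf{M}(\mu_E^{\star}))^{-1}=\|\mathbf{M}(\mu_E^{\star})^{-1}\|$, and the rewriting $(1-\varepsilon)^{-1}=1+1/(\varepsilon^{-1}-1)$. Nothing to add there.

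For the G-optimal part you take a genuinely different route, and it contains one real gap. The paper does not expand $\mathbf{S}_{G,n}^{-1}$ in a Neumann series and does not treat each arm $\mathbf{x}$ separately: it uses the exact resolvent identity $\mathbf{S}_n^{-1}-(\mathbb{E}\mathbf{S}_n)^{-1}=\mathbf{S}_n^{-1}(\mathbb{E}\mathbf{S}_n-\mathbf{S}_n)(\mathbb{E}\mathbf{S}_n)^{-1}$, controls $\|\mathbf{S}_n^{-1}\|$ by the same matrix Chernoff lower tail used in the E-part, controls $\|\mathbf{S}_n-\mathbb{E}\mathbf{S}_n\|$ by a matrix Bennett inequality (this is where $\sigma^2$ enters, as a bound on $\|\mathbb{E}[\mathbf{X}_1^2]-\mathbf{M}(\mu_G^{\star})^2\|$), and only then passes to $\mathbf{x}^{\top}(\cdot)\mathbf{x}\le\|\mathbf{x}\|^2\|\cdot\|\le L\|\cdot\|$, normalizing by $f_{G,n}^{\star}=d/n$. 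Because the whole estimate is an operator-norm bound, it is automatically uniform over $\mathbf{x}\in\mathcal{X}$ and no union bound over the $K$ arms ever occurs; the $d$ inside the logarithm is simply the ambient-dimension factor of the matrix concentration inequalities. Your scalarized version (multinomial counts, per-arm Bernstein) is sound as far as it goes and even recovers the right variance proxy, but as written it must union over $\mathcal{X}$ and therefore delivers $\log(K/\delta)$, not $\log(d/\delta)$. The fix you propose --- the refined intrinsic dimension --- is not the right tool for this: in the paper it only sharpens the dimensional prefactor $d$ of the matrix inequalities (and is used for the later refined theorem, not this one); it does nothing about a cardinality-of-$\mathcal{X}$ union bound. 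To land on the stated constant you should replace the per-arm argument by the operator-norm bound $\mathbf{v}^{\top}(\mathbf{A}-\mathbf{S}_{G,n})\mathbf{v}\le\|\mathbf{v}\|^2\|\mathbf{A}-\mathbf{S}_{G,n}\|$ together with matrix Bennett, exactly as the paper does. The trade-off is that your pointwise route could in principle exploit the geometry of individual arms, whereas the paper's uniform bound pays the crude factor $\|\mathbf{x}\|^2\le L$ for every arm; but only the uniform route yields the advertised $\log(d/\delta)$.
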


Theses results recover the $O(1 / \sqrt{n})$ that one would expect. In addition, this confirms that the randomized approach asymptotically converges toward the true optimum, which is not the case---theoretically---for the greedy strategy. Finally, let us note that the $o(1/\sqrt{n})$ in the G-optimal design rate depends on the interaction between Hoeffding for $\lambda_{\mathrm{min}}(\mathbf{S}_n)$ and Bennett for $\|\mathbf{S}_n - \bbE\mathbf{S}_n\|$. We refer the reader to the supplementary material in Appendix \ref{sec:chernoff} for the full bound.

\subsection{A refined approach of the dimension}
\label{subsec:intdim}


In this section, we introduce two quantities, derived from the concept of \emph{intrinsic dimension} \cite{koltchinskii2016asymptotics, minsker2011some}, that allow us to refine the convergence rate for G-optimal design.
We recall the definition of intrinsic dimension.
\begin{definition}[Intrinsic dimension]
  Let $d > 0$ and $\mathbf{S} \in \bbR^{d \times d}$ be a positive semi-definite matrix. The intrinsic dimension of $\mathbf{S}$, denoted $\intdim(\mathbf{S})$, is defined as follows:
  \[
    \intdim(\mathbf{S}) \triangleq \frac{\trace(\mathbf{S})}{\|\mathbf{S}\|} \leq d.
  \]
\end{definition}

Using this definition, one may alter the concentration results on the spectral norm, 
by replacing the dimension $d$ by $2 \times \intdim(\bbE\mathbf{S}_n)$. For a matrix with eigenvalues decreasing fast enough, the improvement may be substantial---see \cite[Ch. 7]{TroppBook} and references therein for more details. The main drawback of this definition is that if eigenvalues are all of the same order of magnitude, one will not notice a sensible improvement; this is typically the case in G-optimal design as eigenvalues are designed to be large overall. We propose a refined version of the intrinsic dimension allowing improvements even with a narrow spectrum, in the form of 2 complementary quantities.

\begin{definition}[Upper and lower intrinsic dimension]
  Let $d > 0$ and $\mathbf{S} \in \bbR^{d \times d}$ be a positive semi-definite matrix. The upper and lower intrisic dimensions of $\mathbf{S}$, denoted $\updim(\mathbf{S})$ and $\lowdim(\mathbf{S})$ respectively, are defined as follows:
  \[
    \left\{
      \begin{array}{rcl}
        \updim(\mathbf{S}) &\triangleq &\dfrac{\trace\big(\mathbf{S} - \lambda_{\mathrm{min}}(\mathbf{S}) \mathbf{I}\big)}{\|\mathbf{S}\| - \lambda_{\mathrm{min}}(\mathbf{S})}\\[1em]
        \lowdim(\mathbf{S}) &\triangleq &\dfrac{\trace\big(\| \mathbf{S} \| \mathbf{I} - \mathbf{S}\big)}{\|\mathbf{S}\| - \lambda_{\mathrm{min}}(\mathbf{S})} = d - \updim(\mathbf{S}).       
      \end{array}
    \right.
  \]
\end{definition}

These new quantities use both the largest and the smallest eigenvalues to rescale the spectrum, which is of interest in our setting. Using this definition, one is able to formulate new concentration results on random matrices, including a concentration result on the lowest eigenvalue. In this particular case however, we are more interested in the potential speed up provided for the spectral norm, since it is the value controlling the slowest term in the G-optimal design error.

\begin{theorem}
  Let $\mathcal{X} = \{\mathbf{x}_1, \ldots, \mathbf{x}_K\}$ be a set of experiments and let $\mu^{\star}_G$ be the solution of the relaxation of the G-optimal design. Let $\mathbf{X}_1, \ldots, \mathbf{X}_n$ be $n$ i.i.d.\ random matrices drawn according to $\mu^{\star}_G$ and $\mathbf{S}_n$ their sum. Let $\mathbf{V}$ be the covariance matrix of $\mathbf{X}_1$, that is $\mathbf{V} \triangleq \bbE\big[\mathbf{X}_1^2] - \mathbf{M}(\mu^{\star}_G)^2$ and let $\kappa$ be its condition number.
  
  Let $0 \leq \delta \leq 1$ and let $n$ such that
  \[
    n \geq \left(\frac{4L^2}{9\|\mathbf{V}\|}\log(\tilde{d}/\delta)\right)
  \]
  where $L = \max_{x \in \mathcal{X}} \|x\|^2$ and $\tilde{d}$ is defined by 
  $\tilde{d} \triangleq \updim(\mathbf{V}) + \lowdim(\mathbf{V})e^{-n(1 - \kappa^{-1})/16} < d$.
  Then, with probability at $1 - 2\delta$, one has
  \[
    f_G\big(\mathbf{S}_{G, n}^{-1}\big)  \leq \left(1 +  \frac{L}{d} \|\mathbf{M}(\mu^{\star}_G)^{-1}\|^2 \sqrt{\frac{4\sigma^2}{n} \log(\tilde{d}/\delta)} + o\left(\frac{1}{\sqrt{n}}\right)\right) f_{G, n}^{\star}.
  \]
\end{theorem}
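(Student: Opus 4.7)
The proof plan mirrors the G-optimal part of Theorem~\ref{thm:big-result}, substituting the standard matrix Bennett inequality (which carries the dimension $d$ in its failure-probability prefactor) by a sharper concentration exploiting both $\updim(\mathbf{V})$ and $\lowdim(\mathbf{V})$. Since the algebraic reduction from $f_G(\mathbf{S}_{G,n}^{-1})$ to spectral quantities is unchanged, only the concentration step has to be revisited, and I would keep the $o(1/\sqrt n)$ remainder and the multiplicative structure of Theorem~\ref{thm:big-result} intact.

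First, I would re-run the G-optimal reduction. A first-order perturbation of $\|(\mathbf{S}_n/n)^{-1}\|$ around $\|\mathbf{M}(\mu_G^\star)^{-1}\|$ isolates two dominant error sources: a spectral deviation $\|\mathbf{S}_n-\bbE\mathbf{S}_n\|$, handled by matrix Bennett, and a lower control on $\lambda_{\mathrm{min}}(\mathbf{S}_n)$, handled by matrix Hoeffding. The prefactor $L/d$, the dependence on $\|\mathbf{M}(\mu_G^\star)^{-1}\|^2$, and the $o(1/\sqrt n)$ absorption of the Hoeffding term are then inherited verbatim from Theorem~\ref{thm:big-result}.

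Second, the novelty is a two-sided intrinsic-dimension refinement of matrix Bennett for $\|\mathbf{S}_n-\bbE\mathbf{S}_n\|$. Starting from the Laplace transform
\begin{equation*}
\mathbb{P}\bigl(\|\mathbf{S}_n-\bbE\mathbf{S}_n\|\geq t\bigr)\;\leq\;\inf_{\theta>0}e^{-\theta t}\,\bbE\,\trace\bigl[e^{\theta(\mathbf{S}_n-\bbE\mathbf{S}_n)}+e^{-\theta(\mathbf{S}_n-\bbE\mathbf{S}_n)}\bigr],
\end{equation*}
I would decompose the second-moment matrix as $\mathbf{V}=(\mathbf{V}-\lambda_{\mathrm{min}}(\mathbf{V})\mathbf{I})+\lambda_{\mathrm{min}}(\mathbf{V})\mathbf{I}=\|\mathbf{V}\|\mathbf{I}-(\|\mathbf{V}\|\mathbf{I}-\mathbf{V})$. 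The standard trick (Tropp, Ch.~7) turning an ambient $d$ into an intrinsic dimension now produces, from the upper part of the spectrum, a prefactor $\updim(\mathbf{V})$, and from the lower part, a prefactor $\lowdim(\mathbf{V})$. To quench the second contribution, I would condition on the high-probability event that $\lambda_{\mathrm{min}}(\mathbf{S}_n)\geq n\lambda_{\mathrm{min}}(\mathbf{V})\cdot c$ for a suitable constant: a matrix Chernoff/Hoeffding bound controlled by $1-\kappa^{-1}$ renders the complement of that event exponentially small, of order $e^{-n(1-\kappa^{-1})/16}$. Combining the two contributions via a union bound yields the effective dimension $\tilde d=\updim(\mathbf{V})+\lowdim(\mathbf{V})e^{-n(1-\kappa^{-1})/16}$; plugging into the G-optimal estimate replaces $\log(d/\delta)$ by $\log(\tilde d/\delta)$, the factor~$4$ in front of $\sigma^2$ (instead of~$2$) absorbing the two-sided symmetrisation.

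The main obstacle is justifying the $\lowdim(\mathbf{V})\cdot e^{-n(1-\kappa^{-1})/16}$ quenching rigorously: writing the conditional bound so that the exponential Hoeffding factor multiplies $\lowdim$ without inflating $\updim$ requires either a careful conditional Laplace-transform argument or a joint spectral decomposition that separates eigenspaces aligned with high vs.\ low eigenvalues of $\mathbf{V}$. Everything else---the perturbation analysis, the propagation of constants, and the final substitution---is routine once this refined Bennett inequality is in place.
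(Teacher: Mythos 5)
Your overall skeleton matches the paper at the top level: keep the G-optimal reduction from Theorem~\ref{thm:big-result} unchanged (Hoeffding on $\lambda_{\mathrm{min}}(\mathbf{S}_n)$ for the denominator, a Bennett-type bound on $\|\mathbf{S}_n-\bbE\mathbf{S}_n\|$ for the leading term) and swap the plain matrix Bennett bound for a version whose dimensional prefactor is $\tilde d$. The gap is in the one step that is actually new, namely how $\lowdim(\mathbf{V})$ ends up multiplied by $e^{-n(1-\kappa^{-1})/16}$. You propose to obtain this by conditioning on the high-probability event $\{\lambda_{\mathrm{min}}(\mathbf{S}_n)\geq cn\lambda_{\mathrm{min}}(\mathbf{V})\}$ and combining the two contributions via a union bound. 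This cannot work as stated: a union bound adds failure probabilities, it does not multiply the $\lowdim(\mathbf{V})$ prefactor by an exponentially small factor, and the spectrum of the random sum $\mathbf{S}_n$ is not the relevant object in the first place. You rightly flag this as ``the main obstacle'' but leave it open, so the only non-routine step of the proof is missing.

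The paper's mechanism is deterministic and needs no conditioning. The key is the refined convexity bound of Lemma~\ref{lma:updim-cvx}: for convex $\varphi$ and positive semi-definite $\mathbf{Z}$, bounding $\varphi$ by its chord over $[\lambda_{\mathrm{min}}(\mathbf{Z}),\|\mathbf{Z}\|]$ instead of $[0,\|\mathbf{Z}\|]$ gives
\[
\trace\big(\varphi(\mathbf{Z})\big)\leq \updim(\mathbf{Z})\,\varphi\big(\|\mathbf{Z}\|\big)+\lowdim(\mathbf{Z})\,\varphi\big(\lambda_{\mathrm{min}}(\mathbf{Z})\big).
\]
Applying this with $\varphi(x)=e^{g(\eta)x}$ to the \emph{deterministic} variance proxy appearing inside $\trace\exp(\cdot)$ after the matrix MGF bound, the $\lowdim$ term is automatically damped by $e^{g(\eta)(\lambda_{\mathrm{min}}-\|\cdot\|)}=e^{-g(\eta)\|\cdot\|(1-\kappa^{-1})}$ relative to the $\updim$ term, and the stated restriction on $t$ turns this into $e^{-n(1-\kappa^{-1})/16}$. (Incidentally, the factor $4$ in front of $\sigma^2$ does not come from a two-sided symmetrisation; it comes from lower-bounding $\frac{t^2/2}{\sigma^2+Lt/3n}$ by $\frac{t^2}{4\sigma^2}$ on the assumed range of $t$.) So you have correctly located the difficulty, but the tool you reach for---conditioning plus union bound---is the wrong one; the missing ingredient is the chord-over-$[\lambda_{\mathrm{min}},\|\cdot\|]$ convexity lemma applied inside the Laplace-transform bound.
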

We refer the reader to the supplementary material in Appendix \ref{sec:intdim} for the proof of this result. 

\section{Experiments}
\label{sec:experiments}

In this section we compare the performances of randomized E and G-optimal designs against their greedy counterparts. We first show the behavior of the randomized E-optimal design on a synthetic data set. We then apply the randomized G-optimal design to the problem of best arm identification and compare it to the greedy approach used in \citep{SoareNIPS2014}. We refer the reader to Appendix \ref{sec:mab-linear} and \ref{sec:experiment_details} for more details on best arm identification for linear bandits and on the experiments setting, respectively.

\begin{figure}[h]
  \begin{center}
    \subfloat[Evolution of score with $n$]{
      \includegraphics[width=0.3\textwidth]{./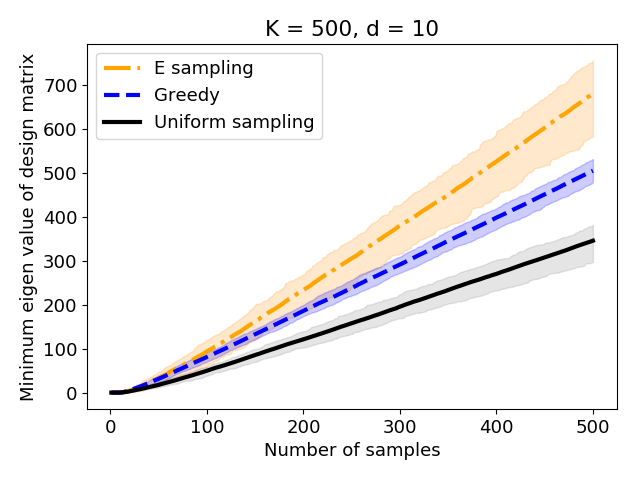}
      \label{e_random_vs_greedy_iterations}
    }
    \subfloat[Evolution of score with $d$]{
      \includegraphics[width=0.3\textwidth]{./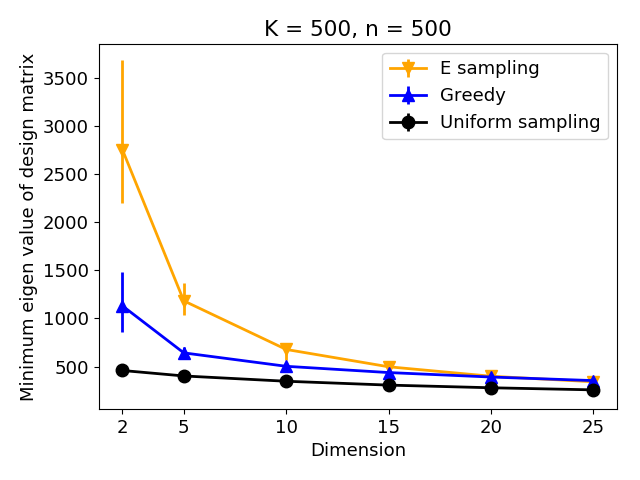}
      \label{e_random_vs_greedy_dimension}
    }
    \subfloat[Evolution of score with $d$]{
      \includegraphics[width=0.3\textwidth]{./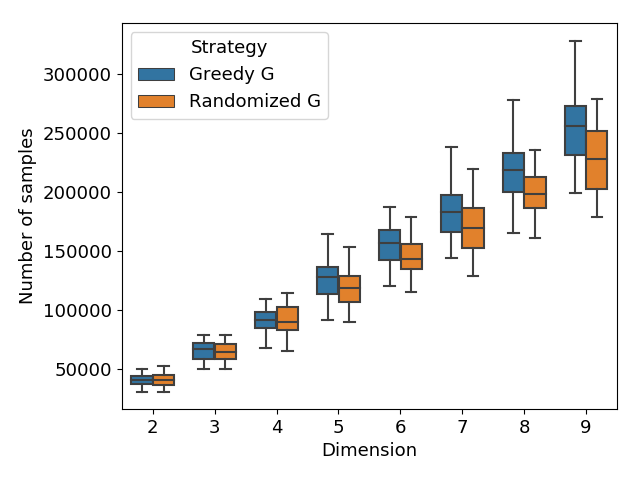}
      \label{expe_soare_random_vs_greedy}
    }
    \label{fig:e_random_vs_greedy}
  \end{center}
\end{figure}

\section{Conclusion}
\label{sec:conclusion}

We have shown the convergence of randomized scheme for G and E-optimal criteria at a rate of $O(1/\sqrt{n})$. We also evidenced the dependence of the rate in a specific characteristic of the covariance matrix for the sampling. Empirically, the random sampling enjoys a favorable comparison with the greedy approach, even in the bandit application. One possible extension of this work could be to investigate the setting of batch or parallel bandits, using a random sampling to select a batch of arms before observing the rewards.

\clearpage
{\small 
\bibliographystyle{plain}
\bibliography{biblio}
}

\clearpage
\appendix

\section{Chernoff inequalities on matrices}
\label{sec:chernoff}

Many concentration inequalities have been developped for bounding the deviation of a sum of i.i.d.\ random variables. In particular, Chernoff inequalities have been extensively studied and derived due to their exponential decay rate on tail distributions. Here we show how these bounds can be extended to random matrices (see \eg \cite{tropp2015introduction} for an introduction on that matter).

\subsection{Additional notations}

For any Hermitian matrices $\mathbf{X}, \mathbf{Y}$, we write $\mathbf{X} \preceq \mathbf{Y}$ if and only if the matrix $\mathbf{Y} - \mathbf{X}$ is positive semidefinite. Recall that for any Hermitian matrix $\mathbf{X}$, there exists a unitary matrix $\mathbf{P}$ and a diagonal matrix $\mathbf{D}$ such that $\mathbf{X} = \mathbf{P} \mathbf{D} \mathbf{P}^{\top}$. For such a matrix and for any function $f : \bbR \rightarrow \bbR$, we denote as $f(\mathbf{X})$ the extension of $f$ to a Hermitian matrix, defined as follows:
\[
  f(\mathbf{X}) \triangleq \mathbf{P}
  \begin{pmatrix}
    f([\mathbf{D}]_{11}) &&\\
    &\ddots&\\
    &&f([\mathbf{D}]_{dd})
  \end{pmatrix}
  \mathbf{P}^{\top}.
\]
In particular, for any scalar $x \in \bbR$, we define $(x)_+ \triangleq \max(x, 0)$ so $(\mathbf{X})_+$ is the projection of $\mathbf{X}$ onto the positive semidefinite cone. We will use the exponential function for both scalars and matrices: for the sake of clarity, we denote as $e^x$ the exponential of a scalar and $\exp(\mathbf{X})$ the exponential of a matrix. We will denote as $\Sp(\mathbf{X})$ the spectrum of $\mathbf{X}$, that is the set of all eigenvalues associated to $\mathbf{X}$. The identity matrix and the zero matrix in dimension $d$ are denoted $\mathbf{I}_d$ and $\mathbf{0}_d$, respectively; when clear from context, we drop the $d$ index.

\subsection{Useful lemmas}
\label{subsec:useful-lemmas}

Before stating the concentration inequalities of interest, we need to state several useful lemmas. These lemmas are key for proving concentration of random matrices, as we need similar guarantees for matrix ordering ($\preceq$) than for scalar ordering ($\leq$). We first state two lemmas that will ensure order preserving under basic operations.

\begin{lemma}[Conjugation Rule]
  \label{lma:conjugation-rule}
  Let $\mathbf{M}, \mathbf{N} \in \bbR^{d \times d}$ be two Hermitian matrices, such that $\mathbf{M} \preceq \mathbf{N}$. Let $p > 0$ and let $\mathbf{Q} \in \bbR^{p \times d}$. Then, one has
  \[
    \mathbf{Q} \mathbf{M} \mathbf{Q}^{\top} \preceq \mathbf{Q} \mathbf{N} \mathbf{Q}^{\top}.
  \]
\end{lemma}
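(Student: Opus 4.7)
The plan is to reduce the matrix inequality directly to the defining quadratic-form characterization of positive semidefiniteness. Define $\mathbf{D} \triangleq \mathbf{N} - \mathbf{M}$, which lies in $\mathbf{S}_d^+$ by the hypothesis $\mathbf{M} \preceq \mathbf{N}$. By linearity of the map $\mathbf{A} \mapsto \mathbf{Q}\mathbf{A}\mathbf{Q}^\top$, one has
\[
\mathbf{Q}\mathbf{N}\mathbf{Q}^\top - \mathbf{Q}\mathbf{M}\mathbf{Q}^\top = \mathbf{Q}\mathbf{D}\mathbf{Q}^\top,
\]
so the claim reduces to showing that $\mathbf{Q}\mathbf{D}\mathbf{Q}^\top \in \mathbf{S}_p^+$. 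Note also that $\mathbf{Q}\mathbf{D}\mathbf{Q}^\top$ is automatically Hermitian since $\mathbf{D}$ is, so only positivity of the spectrum needs to be verified.

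For positivity, I would take an arbitrary $\mathbf{y} \in \bbR^p$ and compute the associated quadratic form, using the standard adjoint identity:
\[
\mathbf{y}^\top \bigl(\mathbf{Q}\mathbf{D}\mathbf{Q}^\top\bigr) \mathbf{y} \;=\; \bigl(\mathbf{Q}^\top \mathbf{y}\bigr)^\top \mathbf{D} \bigl(\mathbf{Q}^\top \mathbf{y}\bigr).
\]
Setting $\mathbf{z} \triangleq \mathbf{Q}^\top \mathbf{y} \in \bbR^d$, this equals $\mathbf{z}^\top \mathbf{D} \mathbf{z}$, which is nonnegative because $\mathbf{D} \in \mathbf{S}_d^+$. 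Since $\mathbf{y}$ was arbitrary, $\mathbf{Q}\mathbf{D}\mathbf{Q}^\top \in \mathbf{S}_p^+$, and therefore $\mathbf{Q}\mathbf{M}\mathbf{Q}^\top \preceq \mathbf{Q}\mathbf{N}\mathbf{Q}^\top$.

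There is no genuine obstacle to overcome here: the entire argument is a one-line unwinding of the definition of the Loewner order together with the fact that a congruence transformation preserves positive semidefiniteness. The only minor care needed is to invoke linearity so that the difference $\mathbf{Q}\mathbf{N}\mathbf{Q}^\top - \mathbf{Q}\mathbf{M}\mathbf{Q}^\top$ can be rewritten as a single congruence of $\mathbf{D}$, rather than trying to manipulate $\mathbf{M}$ and $\mathbf{N}$ separately.
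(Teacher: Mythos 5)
Your proof is correct and follows exactly the paper's own argument: the paper's proof likewise considers $\mathbf{Q}(\mathbf{N}-\mathbf{M})\mathbf{Q}^{\top}$ and applies the definition of positive semidefiniteness, which is precisely your quadratic-form computation spelled out in detail. No gaps; nothing further is needed.
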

\begin{proof}
  The proof is immediate when considering $\mathbf{Q}( \mathbf{N} - \mathbf{M} )\mathbf{Q}^{\top}$ and using the definition of a positive semidefinite matrix.
\end{proof}

\begin{lemma}[Transfer Rule]
  \label{lma:transfer-rule}
  Let $\mathbf{M} \in \bbR^{d \times d}$ be a Hermitian matrix and let $f, g : \bbR \rightarrow \bbR$ be such that, for any $x \in \Sp(\mathbf{M})$, $f(x) \leq g(x)$. Then, one has
  \[
    f(\mathbf{M}) \preceq g(\mathbf{M}).
  \]
\end{lemma}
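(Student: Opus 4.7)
The plan is to prove the Transfer Rule directly from the spectral definition of $f(\mathbf{M})$ and $g(\mathbf{M})$ given right before Lemma~\ref{lma:conjugation-rule}, together with the already-established Conjugation Rule.

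First I would invoke the spectral theorem on $\mathbf{M}$: since $\mathbf{M}$ is Hermitian, write $\mathbf{M} = \mathbf{P} \mathbf{D} \mathbf{P}^{\top}$ with $\mathbf{P}$ unitary and $\mathbf{D}$ diagonal with entries $[\mathbf{D}]_{ii} = \lambda_i$, where $\lambda_1, \ldots, \lambda_d$ are exactly the elements of $\Sp(\mathbf{M})$ (with multiplicity). Then, by the very definition of a function applied to a Hermitian matrix recalled in the paper, $f(\mathbf{M}) = \mathbf{P} \mathrm{diag}(f(\lambda_1), \ldots, f(\lambda_d)) \mathbf{P}^{\top}$ and similarly for $g(\mathbf{M})$. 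Subtracting,
\[
  g(\mathbf{M}) - f(\mathbf{M}) = \mathbf{P} \, \mathrm{diag}\bigl(g(\lambda_1) - f(\lambda_1), \ldots, g(\lambda_d) - f(\lambda_d)\bigr) \, \mathbf{P}^{\top}.
\]

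Next I would note that the pointwise hypothesis $f(x) \leq g(x)$ for $x \in \Sp(\mathbf{M})$ ensures every diagonal entry $g(\lambda_i) - f(\lambda_i)$ is nonnegative, so the inner diagonal matrix is positive semidefinite, i.e.\ $\mathrm{diag}(g(\lambda_i) - f(\lambda_i)) \succeq \mathbf{0}$. Applying the Conjugation Rule (Lemma~\ref{lma:conjugation-rule}) with $\mathbf{Q} = \mathbf{P}$ then yields $g(\mathbf{M}) - f(\mathbf{M}) \succeq \mathbf{P} \mathbf{0} \mathbf{P}^{\top} = \mathbf{0}$, which is exactly $f(\mathbf{M}) \preceq g(\mathbf{M})$.

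There is no real obstacle here: the result is essentially just unfolding the definition of a matrix function, and the only subtlety to flag is that the eigenvalues appearing on the diagonal of $\mathbf{D}$ are precisely the elements of $\Sp(\mathbf{M})$, so the pointwise inequality hypothesis is applicable to each of them. The proof is two or three lines once the spectral decomposition is written down, and all the work is already done by the definition of $f(\mathbf{M})$ and by Lemma~\ref{lma:conjugation-rule}.
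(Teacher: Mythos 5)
Your proof is correct and follows essentially the same route as the paper's: diagonalize $\mathbf{M}$, observe that the pointwise inequality on $\Sp(\mathbf{M})$ gives $f(\mathbf{D}) \preceq g(\mathbf{D})$, and conclude via the Conjugation Rule (Lemma~\ref{lma:conjugation-rule}). You simply spell out the steps that the paper's three-line proof leaves implicit.
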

\begin{proof}
  Let $\mathbf{D}$ be the diagonal matrix in the spectral decomposition of $\mathbf{M}$. Since $f \leq g$ on $\Sp(\mathbf{M})$, one has $f(\mathbf{D}) \preceq g(\mathbf{D})$. The conjugation rule then allows us to conclude.
\end{proof}

Finally, we state two lemmas ensuring that two more complex operations ($\trace\exp$ and $\log$, respectively) preserve the order. Please note that this is usually not the case, even for operators that are monotone on $\bbR$---\eg the exponential does not preserve the order. 
\begin{lemma}[Monotonicity of the trace of the exponential]
  \label{lma:trace-monotonicity}
  Let $\mathbf{M}, \mathbf{N} \in \bbR^{d \times d}$ be two Hermitian matrices such that $\mathbf{M} \preceq \mathbf{N}$. Then for any non-decreasing function $\psi : \bbR \rightarrow \bbR$, one has:
  \[
    \trace\big( \psi(\mathbf{M}) \big) \leq \trace\big( \psi(\mathbf{N})\big).
  \]
  In particular,
  \[
    \trace \exp(\mathbf{M}) \leq \trace \exp(\mathbf{N}).
  \]
\end{lemma}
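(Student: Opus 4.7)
The plan is to reduce the matrix inequality to a scalar inequality between eigenvalues and then invoke monotonicity of $\psi$ coordinate-wise. The key ingredient is Weyl's monotonicity principle: if $\mathbf{M} \preceq \mathbf{N}$ are Hermitian, then, when eigenvalues are listed in non-decreasing order $\lambda_1 \leq \lambda_2 \leq \cdots \leq \lambda_d$, one has $\lambda_k(\mathbf{M}) \leq \lambda_k(\mathbf{N})$ for every $k$. I would derive this from the Courant--Fischer min-max characterization
\[
  \lambda_k(\mathbf{A}) \;=\; \max_{\substack{V \subseteq \bbR^d \\ \dim V = k}} \; \min_{\substack{v \in V \\ \|v\| = 1}} v^\top \mathbf{A} v,
\]
which is order-preserving: since $\mathbf{M} \preceq \mathbf{N}$ means $v^\top \mathbf{M} v \leq v^\top \mathbf{N} v$ for every $v$, the inner minima can only increase when passing from $\mathbf{M}$ to $\mathbf{N}$, and the outer maxima inherit the inequality.

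Once the eigenvalue comparison is in hand, the trace identity finishes the argument. By the spectral decomposition used to define $\psi(\mathbf{M})$ at the start of this appendix, one has $\trace \psi(\mathbf{M}) = \sum_{k=1}^d \psi(\lambda_k(\mathbf{M}))$ and likewise for $\mathbf{N}$. Because $\psi$ is non-decreasing, the termwise inequality $\psi(\lambda_k(\mathbf{M})) \leq \psi(\lambda_k(\mathbf{N}))$ holds, and summing over $k$ yields
\[
  \trace \psi(\mathbf{M}) \;=\; \sum_{k=1}^d \psi(\lambda_k(\mathbf{M})) \;\leq\; \sum_{k=1}^d \psi(\lambda_k(\mathbf{N})) \;=\; \trace \psi(\mathbf{N}).
\]
The ``in particular'' statement for $\trace \exp$ is then immediate, since the scalar exponential is non-decreasing on $\bbR$.

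The main (and essentially only) subtle point is the eigenvalue monotonicity step: it is tempting to argue directly that $\psi(\mathbf{M}) \preceq \psi(\mathbf{N})$, but as the authors warn just before the lemma, monotone scalar functions need not be operator monotone (e.g.\ $x \mapsto e^x$ is not). Taking the trace is what rescues the argument, because the trace only depends on the multiset of eigenvalues; this is why we route through Courant--Fischer rather than through the already-proven Transfer Rule (Lemma~\ref{lma:transfer-rule}), which would require operator monotonicity of $\psi$. All remaining manipulations are routine.
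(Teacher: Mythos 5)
Your proof is correct and follows essentially the same route as the paper's: both use the Courant--Fischer min-max characterization to establish the termwise eigenvalue inequality $\lambda_k(\mathbf{M}) \leq \lambda_k(\mathbf{N})$, then apply the non-decreasing $\psi$ coordinate-wise and sum over the spectrum. Your added remark on why one must take the trace rather than claim $\psi(\mathbf{M}) \preceq \psi(\mathbf{N})$ is a nice clarification but does not change the argument.
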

\begin{proof}
  Let $\lambda_1(\mathbf{M}) \geq \ldots \geq \lambda_d(\mathbf{M})$ and $\lambda_1(\mathbf{N}) \geq \ldots \geq \lambda_d(\mathbf{N})$ be the sorted eigenvalues of $\mathbf{M}$ and $\mathbf{N}$, respectively. Then, for $1 \leq i \leq d$, one can define an eigenvalue as follows:
  \[
    \lambda_i(\mathbf{M}) = \max_{\mathbb{L} \subseteq \bbR^d:\dim \mathbb{L} = i}\; \min_{\mathbf{u} \in \mathbb{L}: \|\mathbf{u}\| = 1} \mathbf{u}^{\top} \mathbf{M} \mathbf{u}.
  \]
  Using the fact that $\mathbf{M} \preceq \mathbf{N}$, one can deduce that for any $1 \leq i \leq d$, $\lambda_i(\mathbf{M}) \leq \lambda_i(\mathbf{N})$.

  Since $\psi$ is a non-decreasing function on $\bbR$, one has that for any $1 \leq i \leq d$, $\psi\big(\lambda_i(\mathbf{M})\big) \leq \psi\big(\lambda_i(\mathbf{N})\big)$. Summing over the dimensions leads to the desired result.
\end{proof}

\begin{lemma}[Monotonicity of the logarithm]
  Let $\mathbf{M}, \mathbf{N} \in \bbR^{d \times d}$ be two positive definite matrices such that $\mathbf{M} \preceq \mathbf{N}$. Then one has:
  \[
    \log(\mathbf{M}) \preceq \log(\mathbf{N}).
  \]
\end{lemma}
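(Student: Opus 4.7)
The plan is to reduce the operator monotonicity of $\log$ to the order-preservation lemmas already established, by means of the classical integral representation
\[
  \log(x) = \int_0^\infty \left(\frac{1}{1+t} - \frac{1}{x+t}\right) dt, \quad x > 0,
\]
which follows from a direct antiderivative computation. Applying this identity eigenvalue by eigenvalue in the spectral decomposition of a positive definite matrix $\mathbf{M}$ yields the matrix analogue
\[
  \log(\mathbf{M}) = \int_0^\infty \left(\frac{1}{1+t}\mathbf{I} - (\mathbf{M}+t\mathbf{I})^{-1}\right) dt.
\]
With this representation at hand, proving $\log(\mathbf{M}) \preceq \log(\mathbf{N})$ reduces to a pointwise (in $t$) comparison of the integrands.

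The heart of the argument is therefore the anti-monotonicity of the matrix inverse on the positive definite cone: if $\mathbf{A} \preceq \mathbf{B}$ are positive definite, then $\mathbf{B}^{-1} \preceq \mathbf{A}^{-1}$. I would establish this using only the lemmas already proved. First, applying the conjugation rule (Lemma~\ref{lma:conjugation-rule}) with $\mathbf{Q} = \mathbf{A}^{-1/2}$ to $\mathbf{A} \preceq \mathbf{B}$ gives $\mathbf{I} \preceq \mathbf{A}^{-1/2}\mathbf{B}\mathbf{A}^{-1/2}$. All eigenvalues of the right-hand side are then at least $1$, so the transfer rule (Lemma~\ref{lma:transfer-rule}) applied to the scalar map $x \mapsto 1/x$ (which satisfies $1/x \leq 1$ on $[1,\infty)$) yields $\mathbf{A}^{1/2}\mathbf{B}^{-1}\mathbf{A}^{1/2} \preceq \mathbf{I}$. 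Conjugating once more by $\mathbf{A}^{-1/2}$ gives $\mathbf{B}^{-1} \preceq \mathbf{A}^{-1}$.

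With anti-monotonicity in hand, the rest is routine. For every $t \geq 0$ one has $\mathbf{M}+t\mathbf{I} \preceq \mathbf{N}+t\mathbf{I}$, so the previous step gives $(\mathbf{N}+t\mathbf{I})^{-1} \preceq (\mathbf{M}+t\mathbf{I})^{-1}$, hence
\[
  \frac{1}{1+t}\mathbf{I} - (\mathbf{M}+t\mathbf{I})^{-1} \preceq \frac{1}{1+t}\mathbf{I} - (\mathbf{N}+t\mathbf{I})^{-1}.
\]
Since $\mathbf{A} \preceq \mathbf{B}$ is equivalent to $\mathbf{u}^{\top}\mathbf{A}\mathbf{u} \leq \mathbf{u}^{\top}\mathbf{B}\mathbf{u}$ for every $\mathbf{u}$, the Loewner order is preserved under integration against the positive measure $dt$. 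Substituting into the integral representation of Step~1 then yields $\log(\mathbf{M}) \preceq \log(\mathbf{N})$.

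The main obstacle I anticipate is the bookkeeping around the improper integral in the representation of $\log$: justifying the matrix-valued identity rigorously requires diagonalizing and checking absolute convergence of the scalar integral for $x > 0$ uniformly in the eigenvalues, which uses positive definiteness to bound the spectrum away from $0$. Once that is in place, the rest of the argument is a direct application of the conjugation and transfer rules already at our disposal.
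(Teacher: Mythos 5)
Your proposal is correct and follows essentially the same route as the paper: the integral representation $\log(\mathbf{M}) = \int_0^\infty \bigl(\tfrac{1}{1+t}\mathbf{I} - (\mathbf{M}+t\mathbf{I})^{-1}\bigr)\,\mathrm{d}t$ combined with anti-monotonicity of the matrix inverse, itself obtained from the conjugation rule (Lemma~\ref{lma:conjugation-rule}). The only cosmetic difference is that you conjugate by $\mathbf{A}^{-1/2}$ and invoke the transfer rule (Lemma~\ref{lma:transfer-rule}) to invert, whereas the paper conjugates by $(\mathbf{N}+\gamma\mathbf{I})^{-1/2}$ and inverts directly; the argument is otherwise identical.
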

\begin{proof}
  We will first prove that for any $\gamma \in \bbR_+$, $(\mathbf{M} + \gamma\mathbf{I})^{-1} \succeq (\mathbf{N} + \gamma\mathbf{I})^{-1}$.

  The facts that $\mathbf{M} \preceq \mathbf{N}$ and $\gamma \geq 0$ imply that $\mathbf{M} + \gamma \mathbf{I} \preceq \mathbf{N} + \gamma \mathbf{I}$. Using Lemma~\ref{lma:conjugation-rule}, we obtain:
  \[
    \mathbf{0} \prec (\mathbf{N} + \gamma \mathbf{I})^{-1/2} (\mathbf{M} + \gamma \mathbf{I}) (\mathbf{N} + \gamma \mathbf{I})^{-1/2} \preceq \mathbf{I}.
  \]
  Taking the inverse yields:
  \[
    (\mathbf{N} + \gamma \mathbf{I})^{1/2} (\mathbf{M} + \gamma \mathbf{I})^{-1} (\mathbf{N} + \gamma \mathbf{I})^{1/2} \succeq \mathbf{I}.
  \]
  Finally, applying again Lemma~\ref{lma:conjugation-rule} with $(\mathbf{N} + \gamma \mathbf{I})^{-1/2}$ yields:
  \[
    (\mathbf{M} + \gamma \mathbf{I})^{-1} \succeq (\mathbf{N} + \gamma \mathbf{I})^{-1}.
  \]

  Let us now focus on the main result. First recall that the logarithm of a positive scalar can be expressed using its integral representation, that is
  \[
    \log x = \int_{0}^{+\infty} \left(\frac{1}{1 + t} - \frac{1}{x + t}\right)\mathrm{d}t,
  \]
  for any $x > 0$. Therefore, the logarithm of a matrix $\mathbf{X} \succ \mathbf{0}$ can be expressed similarly:
  \[
    \log \mathbf{X} = \int_{0}^{+\infty} \left( \frac{1}{1 + t} \mathbf{I} - (\mathbf{X} + t\mathbf{I})^{-1} \right) \mathrm{d}t.
  \]
  In the beginning of the proof, we have shown that for any $\gamma \geq 0$, $(\mathbf{M} + t\mathbf{I})^{-1} \succeq (\mathbf{N} + t\mathbf{I})^{-1}$. Therefore, one has:
  \[
    \frac{1}{1 + \gamma} \mathbf{I} - (\mathbf{M} + \gamma\mathbf{I})^{-1} \preceq \frac{1}{1 + \gamma} \mathbf{I} - (\mathbf{N} + \gamma\mathbf{I})^{-1},
  \]
  and integrating over $\gamma$ yields the final result.
\end{proof}

\subsection{Chernoff inequalities}
\label{subsec:chernoff}

Let $n > 0$ and let $\mathbf{X}_1, \ldots, \mathbf{X}_n$ be i.i.d.\ positive semidefinite matrices, such that there exists $L > 0$ verifying:
\[
  \mathbf{0} \preceq \mathbf{X}_1 \preceq L \mathbf{I},
\]
almost surely.
Let us now consider the random matrix $\mathbf{S} = \sum_{i = 1}^n \mathbf{X}_i$. In what follows, we will develop Chernoff bounds in order to control both $\| \mathbf{S} \| / \| \bbE\mathbf{S} \|$ and $\| \mathbf{S} - \bbE \mathbf{S} \|$.

In the scalar case, Chernoff's bounds for the sum of independent variables are based on the fact that the exponential converts a sum into a products, that is for $n$ i.i.d.\ random variables $X_1, \ldots, X_n$:
\[
  \bbE e^{\sum_{i = 1}^n X_i} = \bbE \prod_{i = 1}^n e^{X_i},
\]
and then one uses the independence to pull the product out of the expectation. For two symmetric matrices $\mathbf{M}, \mathbf{N} \in \bbR^{d \times d}$ however, the relation $\exp(\mathbf{M} + \mathbf{N}) = \exp\mathbf{M} \exp\mathbf{N}$ does not hold in general---it holds if the matrices commute. Hopefully, the following theorem gives us a way to overcome this issue.
\begin{theorem}
  \label{thm:master-bound}
  Let $n, d > 0$ and let $\mathbf{X}_1, \ldots, \mathbf{X}_n$ be i.i.d.\ symmetric matrices in $\bbR^d$. Then, for any $t \in \bbR$, one has
  \[
    \bbP\left(\left\|\sum_{i = 1}^n \mathbf{X}_i\right\| \geq t \right) \leq \inf_{\eta > 0} e^{-\eta t} \trace \exp\left(\sum_{i = 1}^n \log \bbE \exp(\eta \mathbf{X}_i) \right),
  \]
  and similarly
  \[
    \bbP\left(\lambda_{\mathrm{min}}\left(\sum_{i = 1}^n \mathbf{X}_i\right) \leq t \right) \leq \inf_{\eta < 0} e^{-\eta t} \trace \exp\left(\sum_{i = 1}^n \log \bbE \exp(\eta \mathbf{X}_i) \right).
  \]
\end{theorem}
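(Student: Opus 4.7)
The strategy is the matrix analogue of the classical Chernoff argument, combined with Lieb's concavity theorem to handle the non-commutativity of the summands. The plan is first to reduce the tail probability to a trace--exponential moment generating function, and then to peel off the expectations of the $\mathbf{X}_i$ one at a time using a convexity argument.

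For the first inequality, I would start from the Markov-type bound applied to the exponential, which is monotone. Writing $\mathbf{S} = \sum_{i=1}^n \mathbf{X}_i$ and noting that for Hermitian $\mathbf{S}$ one has $\|\mathbf{S}\| = \lambda_{\max}(\mathbf{S})$ in the PSD regime (which is the case of interest for the rest of the paper) and $e^{\eta \lambda_{\max}(\mathbf{S})} = \lambda_{\max}(\exp(\eta \mathbf{S}))$, Lemma~\ref{lma:transfer-rule} yields $\lambda_{\max}(\exp(\eta \mathbf{S})) \leq \trace \exp(\eta \mathbf{S})$ since $\exp(\eta \mathbf{S}) \succeq \mathbf{0}$. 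Combining with Markov's inequality, for any $\eta > 0$,
\[
\bbP(\|\mathbf{S}\| \geq t) \;\leq\; e^{-\eta t}\,\bbE\,\trace\exp\!\Big(\eta \sum_{i=1}^n \mathbf{X}_i\Big).
\]
Taking the infimum over $\eta > 0$ at the end will deliver the form of the stated bound.

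The crux is to bound $\bbE \trace \exp(\eta \sum_i \mathbf{X}_i)$ by a product-like expression. In the scalar case one would simply factor $e^{\sum X_i} = \prod e^{X_i}$ and use independence, but this fails here because the $\mathbf{X}_i$ need not commute. The key external ingredient I would invoke is Lieb's concavity theorem: for any Hermitian $\mathbf{H}$, the map $\mathbf{A} \mapsto \trace \exp(\mathbf{H} + \log \mathbf{A})$ is concave on the cone of positive-definite matrices. Conditioning on $\mathbf{X}_1, \ldots, \mathbf{X}_{n-1}$ and setting $\mathbf{H} = \eta \sum_{i<n} \mathbf{X}_i$, Jensen's inequality applied to Lieb's concave functional gives
\[
\bbE_n \trace \exp\!\big(\mathbf{H} + \eta \mathbf{X}_n\big) \;\leq\; \trace \exp\!\Big(\mathbf{H} + \log \bbE \exp(\eta \mathbf{X}_n)\Big),
\]
effectively peeling off one random summand and replacing it by its (deterministic) log-moment term, which by Lemma~\ref{lma:trace-monotonicity} preserves the ordering needed to iterate. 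Recursing on $\mathbf{X}_{n-1}, \ldots, \mathbf{X}_1$ yields
\[
\bbE \trace \exp\!\Big(\eta \sum_{i=1}^n \mathbf{X}_i\Big) \;\leq\; \trace \exp\!\Big(\sum_{i=1}^n \log \bbE \exp(\eta \mathbf{X}_i)\Big),
\]
and plugging this back into the Markov step completes the first inequality.

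For the minimum eigenvalue tail, I would use the elementary identity $\lambda_{\min}(\mathbf{S}) = -\lambda_{\max}(-\mathbf{S})$, so that $\bbP(\lambda_{\min}(\mathbf{S}) \leq t) = \bbP(\lambda_{\max}(-\mathbf{S}) \geq -t)$, and apply the previous argument to the i.i.d.\ symmetric matrices $-\mathbf{X}_1, \ldots, -\mathbf{X}_n$ with some $\eta' > 0$; rewriting the resulting inequality in terms of $\eta = -\eta' < 0$ reproduces the second bound. The main obstacle in this whole scheme is Lieb's concavity theorem: it is a deep fact of matrix analysis (typically proved via joint convexity of relative entropy or via interpolation) and it is exactly what bypasses the non-commutativity of the $\mathbf{X}_i$. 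Everything else is a direct translation of the scalar Chernoff recipe once the right ``matrix Jensen'' step is available.
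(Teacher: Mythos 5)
Your proposal is correct and follows essentially the same route as the paper's own proof: Markov's inequality on the exponentiated norm, the bound $\|\exp(\eta\mathbf{S})\|\leq\trace\exp(\eta\mathbf{S})$, and then Lieb's concavity theorem combined with Jensen's inequality and the tower property to peel off the summands one at a time, with the $\lambda_{\mathrm{min}}$ case reduced to the first via negation. Your explicit remark that the spectral norm should really be read as $\lambda_{\max}$ (valid in the PSD regime used downstream) is the same implicit convention the paper adopts.
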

\begin{proof}
  We start by the first inequality. Let $t \in \bbR$ and let $\eta > 0$. As in the scalar case, one has:
  \[
    \bbP\left(\left\|\sum_{i = 1}^n \mathbf{X}_i\right\| \geq t \right) = \bbP \left( e^{\eta \|\sum_{i = 1}^n \mathbf{X}_i \|} \geq e^{\eta t} \right) \leq e^{-\eta t} \bbE e^{\eta \|\sum_{i = 1}^n \mathbf{X}_i \|},
  \]
  where the last inequality is an application of Markov's inequality. Using the fact that for a positive semidefinite matrix $\mathbf{X}$, $\| \mathbf{X} \| \leq \trace \mathbf{X}$, we obtain:
  \[
    \bbE e^{\eta \| \sum_{i = 1}^n \mathbf{X}_i \|} = \bbE \left\| \exp\left(\eta \sum_{i = 1}^n \mathbf{X}_i\right) \right\| \leq \bbE \trace \exp\left(\eta \sum_{i = 1}^n \mathbf{X}_i\right).
  \]
  We will now use Lieb's Theorem \cite{lieb1973convex}, which states that for any symmetric matrix $\mathbf{A} \in \bbR^{d \times d}$, the mapping $\mathbf{M} \mapsto \trace \exp(\mathbf{A} + \log\mathbf{M})$ is concave on the cone of positive semidefinite matrices. This allows us to bound the above term as follows:
  \begin{align*}
    \bbE \trace \exp\left(\eta \sum_{i = 1}^n \mathbf{X}_i\right)
    &= \bbE \bbE\left[\trace \exp\left(\eta \sum_{i = 1}^n \mathbf{X}_i\right) \Bigg| \mathcal{F}_{n - 1}\right]\\
    &= \bbE \bbE\left[\trace \exp\left(\eta \sum_{i = 1}^{n - 1} \mathbf{X}_i + \log \exp (\eta \mathbf{X}_n) \right) \Bigg| \mathcal{F}_{n - 1}\right]\\
    &\leq \bbE \trace \exp\left(\eta \sum_{i = 1}^{n - 1} \mathbf{X}_i + \log \bbE \exp (\eta \mathbf{X}_n) \right).
  \end{align*}
  Iterating over $n$ yields:
  \[
    \bbE \trace \exp\left(\eta \sum_{i = 1}^n \mathbf{X}_i\right) \leq \trace \exp \left(\sum_{i = 1}^n \log \bbE \exp(\eta \mathbf{X}_i) \right),
  \]
  hence the result.

  The second inequality is a direct consequence from the fact that for any $\eta < 0$ and any matrix $\mathbf{X}$, $\eta \lambda_{\mathrm{min}}(\mathbf{X}) = \| \eta \mathbf{X} \|$.
\end{proof}

The formulation of Theorem~\ref{thm:master-bound}, although more complicated than is the scalar case, is very helpful for matrix concentration analysis. Indeed, since $\trace \exp$ and $\log$ are both order-preserving operators on positive matrices, a bound on $\bbE \exp(\eta \mathbf{X}_1)$ will now be enough to provide an overall bound of the extreme eigenvalues.

\subsubsection{Hoeffding's inequality}
\label{subsubsec:hoeffding}

The bound we develop here ensures that $\|\mathbf{S}\|$ and $\lambda_{\mathrm{min}}(\mathbf{S})$ do not deviate too much from their counterpart on $\bbE \mathbf{S}$.
We are now ready to state the first result.
\begin{theorem}
  \label{thm:chernoff-matrix}
  Let $\mathbf{X}_1, \ldots, \mathbf{X}_n$ be i.i.d.\ positive semidefinite random matrices, such that there exists $L > 0$ verifying $\mathbf{0} \preceq \mathbf{X}_1 \preceq L \mathbf{I}$. Let $\mathbf{S}$ be defined as:
  \[
    \mathbf{S} \triangleq \sum_{i = 1}^n \mathbf{X}_i.
  \]
  Then, for any $0 < \varepsilon < 1$, one can lowerbound $\lambda_{\mathrm{min}}(\mathbf{S})$ as follows:
  \[
    \bbP(\lambda_{\mathrm{min}}(\mathbf{S}) \leq (1 - \varepsilon) \lambda_{\mathrm{min}}(\bbE \mathbf{S})) \leq
    d \left(\frac{e^{-\varepsilon}}{(1 - \varepsilon)^{1 - \varepsilon}}\right)^{\frac{n\lambda_{\mathrm{min}}(\bbE \mathbf{X}_1)}{L}}.
  \]
  Similarly, one can upperbound $\| \mathbf{S} \|$ as follows:
  \[
    \bbP(\|\mathbf{S}\| \geq (1 + \varepsilon) \|\bbE \mathbf{S} \| )\leq
    d \left(\frac{e^{\varepsilon}}{(1 + \varepsilon)^{1 + \varepsilon}}\right)^{\frac{n\|\bbE \mathbf{X}_1\|}{L}}.
  \]
\end{theorem}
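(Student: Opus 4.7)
The plan is to apply the master bound of Theorem~\ref{thm:master-bound} and reduce the matrix estimate to a scalar optimization, following the usual Chernoff pattern. For the upper bound on $\|\mathbf{S}\|$, fix $\eta > 0$ and bound the matrix moment generating function $\bbE\exp(\eta\mathbf{X}_1)$. The key scalar ingredient is the convexity inequality $e^{\eta x} \leq 1 + (x/L)(e^{\eta L} - 1)$ valid for $x \in [0, L]$. Since $\Sp(\mathbf{X}_1) \subseteq [0, L]$ by hypothesis, the Transfer Rule (Lemma~\ref{lma:transfer-rule}) gives
\[
  \exp(\eta \mathbf{X}_1) \preceq \mathbf{I} + \frac{e^{\eta L} - 1}{L}\mathbf{X}_1.
\]
Taking expectation (which preserves $\preceq$) and then applying monotonicity of the logarithm together with the scalar bound $\log(1 + u) \leq u$ through the Transfer Rule, I obtain
\[
  \log \bbE\exp(\eta \mathbf{X}_1) \preceq \frac{e^{\eta L} - 1}{L} \bbE \mathbf{X}_1.
\]

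Next, I plug this into Theorem~\ref{thm:master-bound} and use monotonicity of $\trace\exp$ (Lemma~\ref{lma:trace-monotonicity}), followed by the elementary bound $\trace\exp(\mathbf{A}) \leq d\,\exp(\lambda_{\max}(\mathbf{A}))$ for any Hermitian $\mathbf{A}$. This yields
\[
  \bbP(\|\mathbf{S}\| \geq t) \leq d\, e^{-\eta t}\exp\!\left( n \frac{e^{\eta L} - 1}{L} \|\bbE \mathbf{X}_1\|\right),
\]
where positivity of $(e^{\eta L}-1)/L$ and of $\bbE\mathbf{X}_1$ justifies identifying $\lambda_{\max}$ with $\|\cdot\|$. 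Setting $t = (1+\varepsilon)\|\bbE\mathbf{S}\| = n(1+\varepsilon)\|\bbE\mathbf{X}_1\|$ and optimizing over $\eta > 0$ gives $\eta^{\star} = \log(1+\varepsilon)/L$, which after substitution produces exactly the claimed expression $d\,(e^{\varepsilon}/(1+\varepsilon)^{1+\varepsilon})^{n\|\bbE\mathbf{X}_1\|/L}$.

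For the lower tail on $\lambda_{\min}(\mathbf{S})$, I repeat the same argument with $\eta < 0$. The scalar inequality $e^{\eta x} \leq 1 + (x/L)(e^{\eta L}-1)$ still holds by convexity for $x \in [0, L]$ regardless of the sign of $\eta$, so the same chain of reductions is valid; the only genuine difference is that now $(e^{\eta L}-1)/L < 0$, so the matrix $\mathbf{A} = n(e^{\eta L}-1)\bbE\mathbf{X}_1/L$ is negative semidefinite and $\lambda_{\max}(\mathbf{A}) = n(e^{\eta L}-1)\lambda_{\min}(\bbE\mathbf{X}_1)/L$. Choosing $t = (1-\varepsilon)\lambda_{\min}(\bbE\mathbf{S})$ and optimizing over $\eta < 0$ leads to $\eta^{\star} = \log(1-\varepsilon)/L$, which produces the claimed bound.

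The main obstacle is bookkeeping around the sign of $\eta$ in the second inequality: one must verify that the transfer of the scalar bound $\log(1+u) \leq u$ to matrices remains consistent when the relevant matrix argument is negative semidefinite, and that the passage $\trace\exp(\mathbf{A}) \leq d \exp(\lambda_{\max}(\mathbf{A}))$ correctly exposes $\lambda_{\min}(\bbE\mathbf{X}_1)$ rather than $\|\bbE\mathbf{X}_1\|$. Once this is handled carefully, the scalar optimization in $\eta$ is routine calculus identical to the classical Chernoff derivation.
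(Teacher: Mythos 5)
Your proposal is correct and follows essentially the same route as the paper: bound the matrix moment generating function via the convexity inequality $e^{\eta x}\leq 1+\frac{x}{L}(e^{\eta L}-1)$ and the transfer rule (this is exactly Lemma~\ref{lma:mgf-bound}), plug into the master bound of Theorem~\ref{thm:master-bound}, replace the trace by $d$ times the extreme eigenvalue of the (negative or positive semidefinite) exponent, and optimize over $\eta$ with the same optimal choice $\eta^{\star}=L^{-1}\log(1\pm\varepsilon)$. The sign bookkeeping you flag is handled correctly, and matches how the paper treats the $\lambda_{\mathrm{min}}$ case.
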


The following corollary shows an alternate (but slightly weaker) formmulation which is closer to usual concentration results.

\begin{corollary}
  Let $\mathbf{X}_1, \ldots, \mathbf{X}_n$ and $\mathbf{S}$ be defined as above. For any $0 < \varepsilon < 1$, one can lowerbound $\lambda_{\mathrm{min}}(\mathbf{S})$ as follows:
  \[
    \bbP(\lambda_{\mathrm{min}}(\mathbf{S}) \leq (1 - \varepsilon) \lambda_{\mathrm{min}}(\bbE \mathbf{S})) \leq
    d \exp\left(-\frac{\varepsilon^2 \lambda_{\mathrm{min}}(\bbE \mathbf{X}_1)}{2L}\right).
  \]
\end{corollary}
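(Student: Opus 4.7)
The plan is to deduce the corollary directly from the Hoeffding-type matrix Chernoff bound of Theorem~\ref{thm:chernoff-matrix}, by replacing the binomial-like tail factor $e^{-\varepsilon}/(1-\varepsilon)^{1-\varepsilon}$ with a cleaner Gaussian-like factor $e^{-\varepsilon^2/2}$. No new matrix argument is needed: the entire work happens on the scalar exponent, since Theorem~\ref{thm:chernoff-matrix} already does the heavy lifting of reducing the random-matrix tail to a scalar quantity raised to the power $n \lambda_{\mathrm{min}}(\bbE \mathbf{X}_1)/L$.

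Concretely, I would first invoke Theorem~\ref{thm:chernoff-matrix} to get, for any $0 < \varepsilon < 1$,
\[
    \bbP\bigl(\lambda_{\mathrm{min}}(\mathbf{S}) \leq (1 - \varepsilon) \lambda_{\mathrm{min}}(\bbE \mathbf{S})\bigr) \leq d \left(\frac{e^{-\varepsilon}}{(1 - \varepsilon)^{1 - \varepsilon}}\right)^{\frac{n\lambda_{\mathrm{min}}(\bbE \mathbf{X}_1)}{L}}.
\]
Then I would establish the elementary inequality
\[
    (1-\varepsilon)\log(1-\varepsilon) + \varepsilon \;\geq\; \frac{\varepsilon^2}{2} \qquad \text{for all } \varepsilon \in [0,1),
\]
which rewrites exactly as $e^{-\varepsilon}/(1-\varepsilon)^{1-\varepsilon} \leq e^{-\varepsilon^2/2}$. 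The proof is a short calculus check: define $f(\varepsilon) = (1-\varepsilon)\log(1-\varepsilon) + \varepsilon - \varepsilon^2/2$, observe $f(0) = f'(0) = 0$, and compute $f''(\varepsilon) = \varepsilon/(1-\varepsilon) \geq 0$ on $[0,1)$, so $f \geq 0$. Plugging this bound into the matrix inequality above and noting that $\lambda_{\mathrm{min}}(\bbE \mathbf{S}) = n\lambda_{\mathrm{min}}(\bbE \mathbf{X}_1)$ (so the displayed exponent in the corollary is naturally read with $\bbE \mathbf{S}$ in place of $\bbE \mathbf{X}_1$) yields the claimed bound.

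There is essentially no obstacle: the matrix-analytic content is entirely contained in Theorem~\ref{thm:chernoff-matrix}, and the remaining task is a one-variable convexity estimate. The only point that warrants care is the choice of sharp constant in the Taylor-type inequality, since a looser bound $1 - \varepsilon + \varepsilon^2/2 - \cdots$ would produce a worse exponent; the computation above gives exactly the $\varepsilon^2/2$ constant stated. This matches the standard passage from the binary relative entropy $\mathrm{KL}(1-\varepsilon \,\|\, 1)$ to its quadratic Pinsker-type lower bound, which is the classical trick used to convert the sharp Chernoff form into the more usable sub-Gaussian form.
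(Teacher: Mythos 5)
Your proof is correct and follows the only natural route: the paper states this corollary without proof, but the intended derivation is exactly your reduction of Theorem~\ref{thm:chernoff-matrix} via the scalar inequality $(1-\varepsilon)\log(1-\varepsilon)+\varepsilon \geq \varepsilon^2/2$, whose convexity verification you carry out correctly. You are also right to flag that the exponent as printed should carry the factor $n$ (equivalently, $\lambda_{\mathrm{min}}(\bbE\mathbf{S})$ in place of $\lambda_{\mathrm{min}}(\bbE\mathbf{X}_1)$); the weaker statement without $n$ still follows since $n \geq 1$.
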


Before proving Theorem~\ref{thm:chernoff-matrix}, we state a useful lemma for bouding moment generating function of random positive semi-definite matrices.

\begin{lemma}
  \label{lma:mgf-bound}
  Let $t \in \bbR$ and let $\mathbf{X}$ be a random matrix such that $\mathbf{0} \preceq \mathbf{X} \preceq L \mathbf{I}$ almost surely for some $L \geq 0$. Then, one has:
  \[
    \bbE \exp(t \mathbf{X}) 
    \preceq \mathbf{I} + \frac{e^{tL} - 1}{L} \bbE \mathbf{X}
    \preceq \exp\left(\frac{e^{tL} - 1}{L} \bbE \mathbf{X}\right).
  \]
\end{lemma}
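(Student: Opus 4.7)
The proof reduces the matrix inequality to two elementary scalar inequalities and then transfers them to the PSD cone via the Transfer Rule (Lemma~\ref{lma:transfer-rule}). The plan is to handle the two inequalities in the statement separately.

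\paragraph{First inequality.}
I first observe that, for any $t \in \bbR$, the map $x \mapsto e^{tx}$ is convex on $\bbR$ (its second derivative is $t^2 e^{tx} \geq 0$). Consequently, on the interval $[0, L]$, $e^{tx}$ lies below the secant joining $(0, 1)$ and $(L, e^{tL})$, which gives the scalar inequality
\[
  e^{tx} \leq 1 + \frac{e^{tL} - 1}{L} x, \qquad x \in [0, L].
\]
Since $\mathbf{0} \preceq \mathbf{X} \preceq L \mathbf{I}$ almost surely, all eigenvalues of $\mathbf{X}$ lie in $[0, L]$, so $\Sp(\mathbf{X}) \subseteq [0, L]$. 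Applying the Transfer Rule to the two scalar functions above yields, pathwise,
\[
  \exp(t \mathbf{X}) \preceq \mathbf{I} + \frac{e^{tL} - 1}{L} \mathbf{X}.
\]
Taking expectations preserves the Loewner order, since if $\mathbf{A} \preceq \mathbf{B}$ a.s. then for every deterministic unit vector $\mathbf{u}$ one has $\mathbf{u}^{\top} \mathbf{A} \mathbf{u} \leq \mathbf{u}^{\top} \mathbf{B} \mathbf{u}$ a.s., so taking expectations and minimizing over $\mathbf{u}$ gives $\bbE\mathbf{A} \preceq \bbE\mathbf{B}$. By linearity of expectation, this produces the first claim
\[
  \bbE \exp(t \mathbf{X}) \preceq \mathbf{I} + \frac{e^{tL} - 1}{L} \bbE \mathbf{X}.
\]

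\paragraph{Second inequality.}
I then use the elementary scalar bound $1 + y \leq e^{y}$ valid for all $y \in \bbR$. Let $\mathbf{A} \triangleq \frac{e^{tL} - 1}{L} \bbE \mathbf{X}$; this matrix is symmetric (as a scalar multiple of the symmetric matrix $\bbE\mathbf{X}$), so its spectrum is a subset of $\bbR$. Applying the Transfer Rule again with $f(y) = 1 + y$ and $g(y) = e^{y}$ on $\Sp(\mathbf{A})$ gives
\[
  \mathbf{I} + \mathbf{A} \preceq \exp(\mathbf{A}),
\]
which is exactly the second claimed inequality. Chaining the two bounds concludes the proof.

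\paragraph{Potential obstacle.}
The argument is essentially a careful application of the Transfer Rule; there is no real technical difficulty, but one point that deserves attention is that the sign of $\frac{e^{tL} - 1}{L}$ depends on $t$, so $\mathbf{A}$ need not be PSD. It is crucial here that the secant inequality $e^{tx} \leq 1 + \frac{e^{tL} - 1}{L} x$ on $[0, L]$ relies only on convexity (not on monotonicity), and that the scalar bound $1 + y \leq e^{y}$ holds for all real $y$, so the Transfer Rule applies uniformly in the sign of $t$. The only place where positivity matters is in guaranteeing $\Sp(\mathbf{X}) \subseteq [0, L]$, which is exactly the hypothesis.
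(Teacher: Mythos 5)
Your proof is correct and follows essentially the same route as the paper's: the secant (convexity) bound $e^{tx} \leq 1 + \frac{e^{tL}-1}{L}x$ on $[0,L]$ transferred to matrices via Lemma~\ref{lma:transfer-rule}, expectation preserving the Loewner order, and then $1+y \leq e^y$ transferred again for the second inequality. Your remark that the second application of the Transfer Rule must work regardless of the sign of $\frac{e^{tL}-1}{L}$ is a careful touch the paper leaves implicit, but it does not change the argument.
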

\begin{proof}
  Both inequalities are derived from the convexity of the exponential. We will write scalar inequalities based on convexity and then extend them to matrices using the transfer rule in Lemma~\ref{lma:transfer-rule}. Let $t \in \bbR$, for any $0 \leq x \leq L$, the following holds:
  \[
    e^{tx} \leq e^{0} + \frac{x}{L} (e^{tL} - e^{0}) = 1 + \frac{e^{tL} - 1}{L} x.
  \]
  Since $\mathbf{0} \preceq \mathbf{X} \preceq L \mathbf{I}$ almost surely, this can be extented to the matrix exponential using the transfer rule in Lemma~\ref{lma:transfer-rule}:
  \[
    \exp(t \mathbf{X}) \preceq \mathbf{I} + \frac{e^{tL} - 1}{L} \mathbf{X}.
  \]
  Taking the expectation yields the result:
  \[
    \bbE \exp(t \mathbf{X}) \preceq \mathbf{I} + \frac{e^{tL} - 1}{L} \bbE \mathbf{X}.
  \]
  The second inequality is also an application of Lemma~\ref{lma:transfer-rule} using the inequality $1 + x \leq e^x$ for any $x \in \bbR$.
\end{proof}

\begin{proof}[Proof of Theorem~\ref{thm:chernoff-matrix}]
  Let $t > 0$. Combining Lemma~\ref{lma:mgf-bound} and Theorem~\ref{thm:master-bound} yields:
  \[
    \bbP\left(\lambda_{\mathrm{min}}(\mathbf{S}) \leq t \right) \leq \inf_{\eta < 0} e^{-\eta t}\trace \exp\left(\frac{e^{\eta L} - 1}{L} \bbE \mathbf{S}\right).
  \]
  Reintegrating $\| \cdot \|$ into the RHS yields:
  \begin{equation}
    \inf_{\eta < 0} e^{-t\eta}\trace \exp\left(\frac{e^{\eta L} - 1}{L} \bbE \mathbf{S}\right)
    \leq \inf_{\eta < 0} d e^{-t\eta} \left\|\exp\left(\frac{e^{\eta L} - 1}{L} \bbE \mathbf{S}\right)\right\|
    = \inf_{\eta < 0} d e^{-t\eta + \frac{e^{\eta L} - 1}{L} \lambda_{\mathrm{min}}(\bbE \mathbf{S})}. \label{ineq:lmin-almost-final}
  \end{equation}
  Since the inequality holds for any $\eta < 0$, we can optimize over $\eta$. The optimal (lowest) value is reached for $\eta = (L)^{-1} \log(t / \lambda_{\mathrm{min}}(\bbE\mathbf{S}))$, which is negative if and only if $t < \lambda_{\mathrm{min}}(\bbE\mathbf{S})$. Let us make the change of variable $t = (1 - \varepsilon) \lambda_{\mathrm{min}}(\bbE\mathbf{S})$ for $0 < \varepsilon < 1$, so the condition holds. Substituting the value of $\eta$ into~\eqref{ineq:lmin-almost-final} yields:
  \[
    \bbP\left(\lambda_{\mathrm{min}}(\mathbf{S}) \leq (1 - \varepsilon)\lambda_{\mathrm{min}}(\bbE \mathbf{S})\right)
    \leq d e^{\big((\varepsilon-1)\log(1 - \varepsilon) - \varepsilon\big) \frac{n\lambda_{\mathrm{min}}(\bbE \mathbf{X}_1)}{L}},
  \]
  and the result holds.
\end{proof}

\begin{remark}
  Without additional characterization of the problem, the bound $\bbE\|\mathbf{X}\| \leq \trace \bbE \mathbf{X} \leq d \|\bbE \mathbf{X}\|$ is tight: consider a diagonal random matrix $\mathbf{X}$ such that for any $1 \leq i \leq d$, $\bbP(\mathbf{X} = \mathbf{E}_{ii}) = 1/d$, where $(\mathbf{E}_{ii})_{1 \leq i \leq d}$ are the diagonal elements from the canonical base. Then, $\|\mathbf{X}\| =  1$ and $\|\bbE \mathbf{X}\| = 1/d$, so $\|\mathbf{X}\| = d \|\bbE \mathbf{X}\|$. If we consider the best arm identification application, this case essentially boils down to the MAB setting and would make the whole linear modeling irrelevant: maybe there is a more subtle way of characterizing linear bandits in order to avoid a brutal $d$ factor in the bound.
\end{remark}

\subsubsection{Bennett's and Bernstein's inequalities}
\label{subsubsec:bernstein}

Using the Chernoff's bound, we were able to prove, with high probability, the following:
\[
  \mathbf{x}^{\top}\left(n \sum_{k = 1}^K [\mu^{\star}_G]_k \mathbf{x}_k \mathbf{x}_k^{\top} \right)^{-1} \mathbf{x}
  \leq \mathbf{x}^{\top}\left(\sum_{i = 1}^n \mathbf{X}_i\right)^{-1} \mathbf{x}
  \leq \frac{\|\mathbf{x}\|^2}{(1 - \varepsilon)} \lambda_{\mathrm{max}}\left(n\sum_{k = 1}^K [\mu^{\star}_G]_k \mathbf{x}_k \mathbf{x}_k^{\top} \right)^{-1}.
\]
This is not enough to ensure the convergence of the randomized sampling. Considering again the random matrix
\[
  \mathbf{S}_n = \sum_{i = 1}^n \mathbf{X}_i,
\]
our goal is to bound the following quantity:
\[
  \mathbf{x}^{\top}\big(\mathbf{S}_n^{-1} - (\bbE\mathbf{S}_n)^{-1}\big)\mathbf{x}.
\]
One way to bound the above quantity is to bound the maximum eigenvalue of $\mathbf{S}_n^{-1} - (\bbE \mathbf{S}_n)^{-1}$. One has:
\[
\mathbf{S}_n^{-1} - (\bbE \mathbf{S}_n)^{-1} = \mathbf{S}_n^{-1}\big( \mathbf{I} - \mathbf{S}_n (\bbE \mathbf{S}_n)^{-1}\big) = \mathbf{S}_n^{-1}( \bbE\mathbf{S} - \mathbf{S}_n)(\bbE \mathbf{S}_n)^{-1}.
\]
In Section~\ref{subsubsec:hoeffding}, we used Hoeffding's inequality to upperbound $\big\|\mathbf{S}_n^{-1}\big\|$ based on $\big\|(\bbE\mathbf{S}_n)^{-1}\big\|$ value. Therefore, we only need to care about the central term. Since the random matrix $\bbE\mathbf{S}_n - \mathbf{S}_n$ is not necessarily positive semi-definite anymore, we cannot use Hoeffding's inequality. We can use Bernstein's inequality however, as stated in the following theorem.
\begin{theorem}
  \label{thm:bennett}
  Let $\mathbf{X}_1, \ldots, \mathbf{X}_n$ be $n$ i.i.d.\ random symmetric matrices such that $\bbE \mathbf{X}_1 = \mathbf{0}$ and there exists $L > 0$ such that $\| \mathbf{X}_1 \| \leq L$, almost surely. Let $\mathbf{S}_n \triangleq \sum_{i = 1}^n \mathbf{X}_i$. Then, for any $t > 0$, one has:
  \[
    \bbP \left( \left\| \mathbf{S}_n \right\| \geq t \right) \leq d e^{\displaystyle-\frac{t^2}{2Lt/3 + 2n\sigma^2}},
  \]
  where $\sigma^2 \triangleq \left\| \bbE \big[\mathbf{X}_1^2\big] \right\|$.
\end{theorem}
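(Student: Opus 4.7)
The plan is to mimic the proof of Theorem~\ref{thm:chernoff-matrix}, feeding a sharper moment-generating-function (MGF) bound into the master Chernoff inequality of Theorem~\ref{thm:master-bound}. The starting point is therefore
\[
  \bbP\big(\|\mathbf{S}_n\| \geq t\big) \leq \inf_{\eta > 0} e^{-\eta t}\, \trace \exp\!\left(\sum_{i=1}^n \log \bbE \exp(\eta \mathbf{X}_i)\right),
\]
and the whole argument reduces to controlling $\log \bbE \exp(\eta \mathbf{X}_1)$ in the semidefinite order.

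First I would establish the scalar Bennett inequality: the function $\phi(u) = (e^u - 1 - u)/u^2$ (extended by continuity at $0$) is non-decreasing on $\bbR$, so for $\eta > 0$ and any $x \in [-L, L]$,
\[
  e^{\eta x} \leq 1 + \eta x + g(\eta)\, x^2,
  \qquad g(\eta) \triangleq \frac{e^{\eta L} - 1 - \eta L}{L^2}.
\]
Since $\|\mathbf{X}_1\| \leq L$ a.s., the spectrum of $\mathbf{X}_1$ lies in $[-L, L]$, and the transfer rule (Lemma~\ref{lma:transfer-rule}) lifts this to $\exp(\eta \mathbf{X}_1) \preceq \mathbf{I} + \eta \mathbf{X}_1 + g(\eta)\mathbf{X}_1^2$. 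Taking expectations and using $\bbE \mathbf{X}_1 = \mathbf{0}$ gives
\[
  \bbE \exp(\eta \mathbf{X}_1) \preceq \mathbf{I} + g(\eta)\,\bbE[\mathbf{X}_1^2].
\]
Applying monotonicity of the matrix logarithm and then the transfer rule on $\log(1+u) \leq u$ yields $\log \bbE \exp(\eta \mathbf{X}_1) \preceq g(\eta)\, \bbE[\mathbf{X}_1^2]$.

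Plugging this into the master bound and using Lemma~\ref{lma:trace-monotonicity} together with $\trace \exp(\mathbf{A}) \leq d\,\|\exp(\mathbf{A})\| = d\, e^{\|\mathbf{A}\|}$ for positive semidefinite $\mathbf{A}$, I obtain
\[
  \bbP\big(\|\mathbf{S}_n\| \geq t\big) \leq d\, \exp\!\left(-\eta t + n\, g(\eta)\, \sigma^2\right),
\]
which is already Bennett's inequality. To convert this into the stated Bernstein form I would use the elementary scalar bound $g(\eta) \leq \dfrac{\eta^2/2}{1 - \eta L/3}$, valid for $0 < \eta < 3/L$, giving
\[
  \bbP\big(\|\mathbf{S}_n\| \geq t\big) \leq d\, \exp\!\left(-\eta t + \frac{n \sigma^2 \eta^2/2}{1 - \eta L/3}\right),
\]
and then choose $\eta = t/(n\sigma^2 + Lt/3)$, which after simplification produces the exponent $-t^2/(2n\sigma^2 + 2Lt/3)$ appearing in the theorem.

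The only genuinely delicate step is the passage from the scalar Bennett inequality to its matrix analogue: it requires combining the transfer rule, the operator monotonicity of the logarithm, and the operator concavity of $\log$ through the $1 + u \leq e^u$ comparison, all of which fail for generic operator-monotone-looking manipulations. Once $\log \bbE \exp(\eta \mathbf{X}_1) \preceq g(\eta)\,\bbE[\mathbf{X}_1^2]$ is established, everything else is a scalar exercise in optimizing $\eta$, strictly analogous to the derivation of Theorem~\ref{thm:chernoff-matrix}.
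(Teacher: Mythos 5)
Your proposal is correct and follows essentially the same route as the paper: the master Chernoff bound (Lieb/subadditivity of the matrix CGF), the scalar Bennett bound $e^{\eta x} \leq 1 + \eta x + g(\eta)x^2$ lifted by the transfer rule into the paper's Lemma~\ref{lma:bennett}, the $\trace \leq d\,\|\cdot\|$ step, and the same choice $\eta = t/(n\sigma^2 + Lt/3)$. Your variant of taking $\log(\mathbf{I}+\mathbf{A}) \preceq \mathbf{A}$ rather than $\mathbf{I}+\mathbf{A} \preceq \exp(\mathbf{A})$ is an immaterial difference.
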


As in the scalar case, Bernstein's inequality relies on using the Taylor expansion of the exponential to bound the moment generating function, so we will need the following lemma.

\begin{lemma}
  \label{lma:bennett}
  Let $L > 0$ and let $\mathbf{X}$ be a random Hermitian matrix such that $\bbE \mathbf{X} = \mathbf{0}$ and $\mathbf{X} \preceq L \mathbf{I}$ almost surely. Then, for any $0 < t < 3 / L$, one has:
  \[
    \bbE \exp (t\mathbf{X}) \preceq \exp\left(\frac{t^2/2}{1 - tL/3} \bbE \big[\mathbf{X}^2 \big]\right).
  \]
\end{lemma}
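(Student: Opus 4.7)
The plan is to reduce the matrix statement to a one-sided scalar inequality
\[
e^{tx} \leq 1 + tx + c(t,L)\, x^{2} \quad \text{for all } x \leq L, \qquad c(t,L) \triangleq \frac{t^{2}/2}{1 - tL/3},
\]
then lift it to Hermitian matrices via Lemma~\ref{lma:transfer-rule}, take expectations using $\bbE\mathbf{X} = \mathbf{0}$, and finally trade $\mathbf{I} + \mathbf{M}$ for $\exp(\mathbf{M})$ with one last application of the transfer rule. The only genuinely new piece of analysis is the scalar bound; everything else is routine use of tools from Section~\ref{subsec:useful-lemmas}.

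To obtain the scalar bound I would split the work in two pieces. Writing $e^{tx} - 1 - tx = (tx)^{2}\phi(tx)$ with $\phi(y) \triangleq \sum_{k\geq 0} y^{k}/(k+2)!$, the first piece is the monotonicity of $\phi$ on $\bbR$. A direct calculation gives $\phi'(y) = \big((y-2)e^{y} + y + 2\big)/y^{3}$; setting $h(y) \triangleq (y-2)e^{y} + y + 2$ one has $h(0) = h'(0) = 0$ and $h''(y) = y e^{y}$, so $h$ is convex on $y \geq 0$ and concave on $y \leq 0$, forcing $h$ and $y^{3}$ to share their sign, hence $\phi' \geq 0$. Applied to $x \leq L$ with $t > 0$, this yields
\[
e^{tx} - 1 - tx \leq \frac{x^{2}}{L^{2}}\big(e^{tL} - 1 - tL\big).
\]
The second piece controls the right-hand factor: for $0 < tL < 3$, Taylor-expanding and using the elementary bound $k! \geq 2 \cdot 3^{k-2}$ for $k \geq 2$ (quick induction) gives $(e^{tL} - 1 - tL)/L^{2} \leq c(t,L)$, which combined with the previous display is the desired scalar inequality.

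With the scalar bound in hand, the assumption $\mathbf{X} \preceq L\mathbf{I}$ ensures every eigenvalue of $\mathbf{X}$ lies in $(-\infty, L]$, so Lemma~\ref{lma:transfer-rule} upgrades it to
\[
\exp(t\mathbf{X}) \preceq \mathbf{I} + t\mathbf{X} + c(t,L)\, \mathbf{X}^{2}.
\]
Taking expectations, using $\bbE\mathbf{X} = \mathbf{0}$, and noting that $\bbE[\mathbf{X}^{2}]$ is positive semidefinite (while $c(t,L) > 0$ for $0 < t < 3/L$), gives $\bbE\exp(t\mathbf{X}) \preceq \mathbf{I} + c(t,L)\,\bbE[\mathbf{X}^{2}]$. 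Setting $\mathbf{M} \triangleq c(t,L)\,\bbE[\mathbf{X}^{2}] \succeq \mathbf{0}$, the scalar inequality $1 + y \leq e^{y}$ applied on $\mathrm{Sp}(\mathbf{M}) \subseteq [0,\infty)$ via Lemma~\ref{lma:transfer-rule} yields $\mathbf{I} + \mathbf{M} \preceq \exp(\mathbf{M})$, which closes the argument.

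The main obstacle is the scalar step. Because only the one-sided bound $\mathbf{X} \preceq L\mathbf{I}$ is assumed (and not a spectral norm bound), a naive termwise estimate $|tx|^{k} \leq (tL)^{k-2}(tx)^{2}$ of the Taylor series fails when $x$ is very negative. Monotonicity of $\phi$ is exactly the device that substitutes the missing lower bound on $\mathbf{X}$ by the quadratic rescaling $x^{2}/L^{2}$; once that calculus lemma is settled, the rest is bookkeeping through the transfer rule.
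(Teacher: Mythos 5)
Your proof is correct and follows essentially the same route as the paper's: the same decomposition $e^{tx} = 1 + tx + x^2 f(x)$ with $f(x) = (e^{tx}-tx-1)/x^2$, the same bound $f(L) \leq (t^2/2)/(1-tL/3)$ via $k! \geq 2\cdot 3^{k-2}$, and the same lift through the transfer rule and the inequality $1+y\leq e^y$. In fact your treatment is slightly more complete: the paper asserts the monotonicity of $f$ without proof and states the scalar inequality only for $0 < x < L$, whereas your argument via $h(y)=(y-2)e^{y}+y+2$ establishes it on all of $(-\infty, L]$, which is what is actually needed since $\mathbf{X}$ is only bounded above.
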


\begin{proof}
  Similarly to the Hoeffding's case, we will show a result for the exponential of a scalar and extend it to a Hermitian matrix. Let $L > 0$, $0 < x < L$ and $0 < t < 3 / L$. Let us define $f : [0, L] \rightarrow \bbR$ such that for any $0 < y < L$,
  \[
    f(y) \triangleq \frac{e^{ty} - ty - 1}{y^2}.
  \]
  In particular, one has $e^{tx} = 1 + tx + x^2 f(x)$. Notice that $f$ is increasing, so $e^{tx} \leq 1 + tx + x^2 f(L)$. Now, using the Taylor expansion of the exponential, we can write:
  \[
    f(L)
    = \frac{e^{tL} - tL - 1}{L^2}
    = \frac{1}{L^2} \sum_{k \geq 2} \frac{(tL)^{k}}{k!}
    \leq \frac{t^2}{2} \sum_{k \geq 2} \frac{(tL)^{k - 2}}{3^{k-2}}
    = \frac{t^2/2}{1 - tL/3},
  \]
  where the inequality comes from the fact that $k! \geq 2 \times 3^{k - 2}$, for any $k \geq 2$.

  Now, using the fact that $\mathbf{X} \preceq L \mathbf{I}$ almost surely, we can obtain the following bound:
  \[
    \exp(t\mathbf{X})
    \preceq \mathbf{I} + t \mathbf{X} + \mathbf{X} (f(L) \mathbf{I}) \mathbf{X}
    = \mathbf{I} + t \mathbf{X} + f(L) \mathbf{X}^2.
  \]
  Finally, taking the expectation and combining this result with a common bound of the exponential, we obtain:
  \[
    \bbE \exp(t\mathbf{X})
    \preceq \mathbf{I} + \frac{t^2 / 2}{1 - tL/3} \bbE\big[\mathbf{X}^2\big]
    \preceq \exp\left(\frac{t^2 / 2}{1 - tL/3} \bbE\big[\mathbf{X}^2\big] \right),
  \]
  hence the result.
\end{proof}

We are now ready to prove Bernstein's inequality for matrices.

\begin{proof}[Proof of Theorem~\ref{thm:bennett}]
  Let $0 < \eta < 3 / L$, using Markov's inequality one has:
  \[
    \bbP(\|\mathbf{S}_n\| \geq t)
    = \bbP \left( e^{\eta \|\mathbf{S}_n\|} \geq e^{\eta t}\right)
    \leq e^{-\eta t} \bbE e^{\eta \|\mathbf{S}_n\|}.
  \]
  By Lemma~\ref{lma:bennett} and the subadditivity of the matrix CGF \cite[Lemma 3.5.1, Ch. 3]{tropp2015introduction}, we obtain
  \[
    \bbE e^{\eta \|\mathbf{S}_n\|}
    = \bbE \|\exp(\eta \mathbf{S}_n)\|
    \leq \trace \bbE \exp(\eta \mathbf{S}_n)
    \leq \trace \exp\left(\frac{t^2 / 2}{1 - tL/3} \bbE\big[\mathbf{S}_n^2\big] \right).
  \]
  Plugin the trace back into the exponential yields:
  \[
    \bbE e^{\eta \| \mathbf{S}_n \|} \leq \trace \exp\left(\frac{\eta^2 / 2}{1 - \eta L/3} \bbE\big[\mathbf{S}_n^2\big] \right) \leq d e^{\frac{\eta^2 / 2}{1 - \eta L/3} \left\|\bbE\big[\mathbf{S}_n^2\big]\right\|}.
  \]
  Optimizing on $\eta$ would lead to a complicated result, so we use instead $\eta = t / (n\sigma^2 + tL/3)$, which verifies the condition $\eta < 3 / L$ and yields the final result. 
\end{proof}

The relationship between the precision ($nt$ in Theorem~\ref{thm:bennett}) and the confidence level $\delta_b$ (the RHS of the concentration inequality) is more complicated in Bernstein's inequality than in Hoeffding's. It requires solving a second order polynomial equation and leads to:
\[
  t = \frac{L}{3n}\log\frac{2d}{\delta_b} + \sqrt{\left(\frac{L}{3n}\log\frac{2d}{\delta_b}\right)^2 + \frac{2\sigma^2}{n}\log\frac{2d}{\delta_b}}.
\]
In our case, we will use the bound provided by Bennett's inequality applied to random Hermitian matrices, as it is simpler to derive the precision associated to a confidence level.
\begin{theorem}
  \label{thm:bernstein}
  Let $\mathbf{X}_1, \ldots, \mathbf{X}_n$ be $n$ i.i.d.\ random Hermitian matrices such that $\bbE \mathbf{X}_1 = \mathbf{0}$ and there exists $\sigma^2 > 0$ such that $\| \bbE\big[\mathbf{X}_1\big]^2 \| \leq \sigma^2$. In addition, let us assume that there exists $c > 0$ such that for any $q \geq 3$:
  \[
    \left\|\bbE\big[(\mathbf{X}_1)_+^q\big]\right\| \leq \frac{q!}{2} \sigma^2 c^{q - 2},
  \]
  where for any symmetric matrix $\mathbf{X}$,$(\mathbf{X})_+$ is the orthogonal projection of $\mathbf{X}$ onto the semidefinite positive cone.
  Then, for any $t > 0$, one has:
  \[
    \bbP \left( \left\| \sum_{i = 1}^n \mathbf{X}_i \right\| \geq \sqrt{2n\sigma^2t} + ct \right) \leq d e^{-t}.
  \]
\end{theorem}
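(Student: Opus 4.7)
My plan follows the standard three-step matrix Chernoff recipe used throughout Appendix~\ref{sec:chernoff}: (i) derive a Bernstein-type upper bound on the matrix moment generating function of each summand from the moment hypothesis, (ii) inject it into the master bound of Theorem~\ref{thm:master-bound}, and (iii) substitute an ansatz for $\eta$ that yields the prescribed deviation $\sqrt{2n\sigma^2 t}+ct$.

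The first step is to establish
\[
  \bbE\exp(\eta\mathbf{X}_1)\;\preceq\;\exp\!\left(\frac{\eta^2\sigma^2/2}{1-c\eta}\,\mathbf{I}\right),\qquad 0<\eta<1/c.
\]
Expanding the matrix exponential, the linear term vanishes by centering and the $q=2$ term contributes $(\eta^2/2)\,\bbE[\mathbf{X}_1^2]\preceq(\eta^2\sigma^2/2)\,\mathbf{I}$. For $q\ge 3$, the spectral inequality $\mathbf{X}_1^q\preceq(\mathbf{X}_1)_+^q$ (for odd $q$), combined with the moment hypothesis applied analogously to $-\mathbf{X}_1$ for the even terms, yields $\|\bbE\mathbf{X}_1^q\|\le q!\,\sigma^2 c^{q-2}/2$. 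Lemma~\ref{lma:transfer-rule} lifts this to a majorization by $(q!\,\sigma^2 c^{q-2}/2)\,\mathbf{I}$; summing the resulting geometric series $\sum_{q\ge 2}(c\eta)^{q-2}=(1-c\eta)^{-1}$ and applying the scalar inequality $1+u\le e^u$ termwise closes this step.

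Feeding the bound into Theorem~\ref{thm:master-bound} and using $\trace\exp(\alpha\mathbf{I})\le d\,e^\alpha$ yields
\[
  \bbP(\|\mathbf{S}_n\|\ge s)\;\le\;d\exp\!\left(-\eta s+\frac{n\eta^2\sigma^2/2}{1-c\eta}\right),\qquad 0<\eta<1/c.
\]
Rather than optimizing $\eta$ by differentiation, I would substitute $\eta=y/(c(1+y))$ with $y\triangleq c\sqrt{2t/(n\sigma^2)}$, which automatically satisfies $\eta<1/c$ and simplifies to $1-c\eta=1/(1+y)$. Direct algebra collapses the exponent to $(1+y)^{-1}\big[\,n\sigma^2 y^2/(2c^2)-ys/c\,\big]$; substituting $s=\sqrt{2n\sigma^2 t}+ct$ gives $ys/c=2t+yt$ and $n\sigma^2 y^2/(2c^2)=t$, so the bracket equals $-t(1+y)$ and the whole exponent collapses to exactly $-t$, as required.

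The routine ingredients are the Taylor expansion and the final algebraic verification. The main technical hurdle is the MGF bound in step one: the hypothesis controls only $(\mathbf{X}_1)_+$, while even-order moments $\bbE\mathbf{X}_1^q$ also receive contributions from the negative spectrum $(-\mathbf{X}_1)_+$. Making this rigorous requires either an implicit symmetrization (postulating the same moment bound for $-\mathbf{X}_1$) or splitting the argument into the two upper tails of $\lambda_{\mathrm{max}}(\pm\mathbf{S}_n)$ via a union bound, with the harmless factor $2$ absorbed into the dimension constant $d$ without affecting the $1/\sqrt n$ rate.
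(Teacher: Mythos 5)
Your proposal is correct and is essentially the paper's intended argument written out in full: the paper's proof of Theorem~\ref{thm:bernstein} consists only of Lemma~\ref{lma:bernstein} (itself left unproved) plus a pointer to the scalar Bernstein argument in Boucheron et al., and your three steps --- the MGF bound, the master bound of Theorem~\ref{thm:master-bound}, and the substitution $\eta = y/(c(1+y))$ with $y = c\sqrt{2t/(n\sigma^2)}$ --- supply exactly the omitted details (your closing algebra checks out: the exponent does collapse to $-t$). One refinement on the hurdle you flag: for the even-order terms you do not need any moment hypothesis on $-\mathbf{X}_1$, because the scalar inequality $e^{u} \le 1 + u + u^2/2$ for $u \le 0$, combined with the exact Taylor series for $u \ge 0$, gives $e^{\eta x} \le 1 + \eta x + \tfrac{\eta^2 x^2}{2} + \sum_{q \ge 3} \tfrac{\eta^q (x)_+^q}{q!}$ for every real $x$; lifting this through the transfer rule (Lemma~\ref{lma:transfer-rule}) bounds $\bbE\exp(\eta\mathbf{X}_1)$ using only $\bbE[\mathbf{X}_1^2] \preceq \sigma^2\mathbf{I}$ and the stated control of $\bbE[(\mathbf{X}_1)_+^q]$, which is precisely how the negative spectrum gets absorbed into the variance term in the scalar proof. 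What genuinely does require the symmetrized hypothesis (or a restatement of the conclusion for $\lambda_{\mathrm{max}}(\mathbf{S}_n)$ only) is the passage from the upper tail of $\lambda_{\mathrm{max}}(\mathbf{S}_n)$ to the spectral norm $\|\mathbf{S}_n\|$ via a union bound over $\pm\mathbf{S}_n$; you are right that the paper glosses over this point, and your second proposed fix is the appropriate one.
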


The proof is very similar to Bernstein's: we need an intermediary result on the moment generating function, as stated in the following lemma.

\begin{lemma}
  \label{lma:bernstein}
  Let $\sigma^2, c > 0$ and let $\mathbf{X}$ be a random Hermitian matrix such that $\bbE \mathbf{X} = \mathbf{0}$ and $\| \bbE\big[\mathbf{X}^2\big] \| \leq \sigma^2$. In addition, we assume that for any $q \geq 3$,
  \[
    \left\| \bbE\big[(\mathbf{X})_+^q\big]\right\| \leq \frac{q!}{2} \sigma^2 c^{q - 2}.
  \]
  Then, for any $0 < t < 1 / c$, one has:
  \[
    \bbE \exp (t\mathbf{X}) \preceq \exp\left(\frac{t^2/2}{1 - ct} \bbE\big[ \mathbf{X}^2 \big]\right).
  \]
\end{lemma}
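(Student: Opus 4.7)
The plan is to adapt the scalar Bernstein argument to the matrix setting, mirroring the structure of Lemma~\ref{lma:bennett} but replacing the uniform semi-definite cap $\mathbf{X} \preceq L \mathbf{I}$ by the moment-growth hypothesis. Concretely, I would start from the Taylor series of the matrix exponential and, using $\bbE \mathbf{X} = \mathbf{0}$, write
\[
\bbE \exp(t\mathbf{X}) = \mathbf{I} + \sum_{q \geq 2} \frac{t^q}{q!}\, \bbE \mathbf{X}^q.
\]
The target then reduces to dominating the tail series by $\frac{t^2/2}{1 - ct}\, \bbE[\mathbf{X}^2]$ in the Loewner order; the matrix-exponential form follows at once from the elementary inequality $\mathbf{I} + \mathbf{M} \preceq \exp(\mathbf{M})$, itself a consequence of $1 + x \leq e^x$ via the transfer rule (Lemma~\ref{lma:transfer-rule}).

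Next I would upgrade the operator-norm moment assumption into a Loewner-order bound of the form $\bbE \mathbf{X}^q \preceq \frac{q!}{2}\, c^{q-2}\, \bbE \mathbf{X}^2$ for every $q \geq 2$. The $q=2$ case is immediate. For odd $q \geq 3$, the spectral decomposition yields the pointwise inequality $\mathbf{X}^q = (\mathbf{X})_+^q - (\mathbf{X})_-^q \preceq (\mathbf{X})_+^q$, so the positive-part moment hypothesis transfers directly. For even $q \geq 4$, I would write $\mathbf{X}^q = \mathbf{X}\, \mathbf{X}^{q-2}\, \mathbf{X}$ and use the conjugation rule (Lemma~\ref{lma:conjugation-rule}) together with the norm bound on $\mathbf{X}^{q-2}$ deduced from the positive-part hypothesis, to extract a scalar factor $c^{q-2}$ while keeping $\bbE \mathbf{X}^2$ as the residual Loewner term.

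Once the uniform moment bound is in place, summing becomes a routine geometric calculation:
\[
\sum_{q \geq 2} \frac{t^q}{q!}\, \bbE \mathbf{X}^q \preceq \frac{t^2}{2}\, \bbE \mathbf{X}^2 \sum_{k \geq 0} (ct)^k = \frac{t^2/2}{1 - ct}\, \bbE \mathbf{X}^2,
\]
valid for $0 < t < 1/c$; combining with $\mathbf{I} + \mathbf{M} \preceq \exp(\mathbf{M})$ delivers the claim. The main obstacle I anticipate lies in the even-$q$ step of the moment upgrade: the hypothesis controls only $\bbE (\mathbf{X})_+^q$ in operator norm, whereas even moments of $\mathbf{X}$ collect contributions from both signs of the spectrum, so one must either exploit an implicit symmetry that effectively promotes the hypothesis to a bound on $\bbE |\mathbf{X}|^q$, or perform a more delicate Loewner interpolation between $(\mathbf{X})_+^q$ and $\mathbf{X}^2$. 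Everything else---Taylor expansion, geometric summation, and the final linearization of the exponential---is essentially mechanical once that moment upgrade is settled.
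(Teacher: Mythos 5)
Your skeleton (Taylor expansion of $\bbE\exp(t\mathbf{X})$, a uniform moment bound, geometric summation, then $\mathbf{I} + \mathbf{M} \preceq \exp(\mathbf{M})$) is the right one, but the middle step --- upgrading the hypothesis to $\bbE\mathbf{X}^q \preceq \frac{q!}{2}c^{q-2}\,\bbE\mathbf{X}^2$ --- is exactly where the argument breaks, and the repairs you sketch do not work. Two distinct problems. First, the hypothesis only bounds $\big\|\bbE[(\mathbf{X})_+^q]\big\|$, which gives $\bbE[(\mathbf{X})_+^q] \preceq \frac{q!}{2}\sigma^2 c^{q-2}\mathbf{I}$; since the other hypothesis is $\bbE\mathbf{X}^2 \preceq \sigma^2\mathbf{I}$, replacing $\sigma^2\mathbf{I}$ by $\bbE\mathbf{X}^2$ as the Loewner majorant goes in the \emph{wrong} direction and is not justified. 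Second, and more seriously, your even-$q$ step via the conjugation rule cannot be executed: Lemma~\ref{lma:conjugation-rule} requires a deterministic Loewner inequality on $\mathbf{X}^{q-2}$ holding per realization, whereas the hypothesis only controls an \emph{expectation}; there is no way to pull a scalar $c^{q-2}$ out of $\bbE[\mathbf{X}\,\mathbf{X}^{q-2}\mathbf{X}]$ from a bound on $\|\bbE[(\mathbf{X})_+^{q-2}]\|$, and for even $q$ the moment $\mathbf{X}^q = (\mathbf{X})_+^q + (\mathbf{X})_-^q$ contains a negative-part contribution about which the hypothesis says nothing at all.

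The intended route (the one that makes this "very similar to Lemma~\ref{lma:bennett}") never bounds $\bbE\mathbf{X}^q$ for $q \geq 3$. Instead one uses the scalar inequality, valid for every $x \in \bbR$ and $t > 0$,
\[
  e^{tx} \;\leq\; 1 + tx + \frac{t^2 x^2}{2} + \sum_{q \geq 3} \frac{t^q (x)_+^q}{q!},
\]
(for $x \leq 0$ this is $e^{u} \leq 1 + u + u^2/2$ with $u = tx \leq 0$; for $x > 0$ it is an identity), lifts it to $\mathbf{X}$ by the transfer rule (Lemma~\ref{lma:transfer-rule}), and only then takes expectations, so that precisely the quantities $\bbE[(\mathbf{X})_+^q]$ controlled by the hypothesis appear. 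Summing the geometric series with $\bbE[(\mathbf{X})_+^q] \preceq \frac{q!}{2}\sigma^2 c^{q-2}\mathbf{I}$ and finishing with $\mathbf{I} + \mathbf{M} \preceq \exp(\mathbf{M})$ gives the bound with $\sigma^2\mathbf{I}$ (hence a fortiori any majorant of $\bbE\mathbf{X}^2$) in the exponent; obtaining the exponent with $\bbE\mathbf{X}^2$ itself, as the lemma states, requires reading the hypothesis as a Loewner bound relative to $\bbE\mathbf{X}^2$ rather than an operator-norm bound --- a looseness in the statement that your proof cannot be expected to bridge, but which your conjugation-rule detour does not bridge either.
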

The proof of this Lemma is ommitted as it is very similar to Lemma~\ref{lma:bennett}. 

\begin{proof}[Proof of Theorem~\ref{thm:bernstein}]
  This proof can be directly adapted from Bernstein's using standard results on concentration (see \eg \cite{boucheron2013concentration} for details).
\end{proof}

\begin{proof}[Proof of Theorem~\ref{thm:big-result}]
  As mentionned in the beginning of this section, our goal is to bound $\|\mathbf{S}_n^{-1}(\mathbf{S}_n - \bbE\mathbf{S}_n)(\bbE\mathbf{S}_n)^{-1}\|$. Let us assume that the batch size $n$ satisfies:
\[
  n > \frac{2L \log d}{\lambda_{\mathrm{min}}(\bbE\mathbf{X}_1)}.
\]
Let $de^{-\frac{n\lambda_{\mathrm{min}}(\bbE\mathbf{X}_1)}{2L}} < \delta_h < 1$. Using the Chernoff's bound, we know that with probability at least $1 - \delta_h$, the following holds true:
\[
  \| \mathbf{S}_n^{-1} \| \leq \frac{\| (\bbE\mathbf{S}_n)^{-1} \|}{1 - \sqrt{\frac{2L}{n} \| (\bbE\mathbf{X}_1)^{-1} \| \log(d/\delta_h)}}.
\]
Similarly, let $0 < \delta_b < 1$; using Bennett's inequality, with probability at least $1 - \delta_b$, we have:
\[
  \|\mathbf{S}_n - \bbE \mathbf{S}_n\| \leq \frac{L}{3}\log\frac{d}{\delta_b} + \sqrt{2n\sigma^2\log\frac{d}{\delta_b}}.
\]
Combining these two results with a union bound leads to the following bound, with probability $1 - (\delta_b + \delta_h)$:
\begin{align}
  \|\mathbf{S}_n^{-1} - (\bbE\mathbf{S}_n)^{-1}\| &\leq \| (\bbE\mathbf{S}_n)^{-1} \|^2\frac{(L/3)\log(d/\delta_b) + \sqrt{2n\sigma^2\log(d/\delta_b)}}{1 - \sqrt{(2L/n) \| (\bbE\mathbf{X}_1)^{-1} \| \log(d/\delta_h)}} \notag \\
  &\leq \frac{1}{n^2} \| (\bbE\mathbf{X}_1)^{-1} \|^2\frac{(L/3)\log(d/\delta_b) + \sqrt{2n\sigma^2\log(d/\delta_b)}}{1 - \sqrt{(2L/n) \| (\bbE\mathbf{X}_1)^{-1} \| \log(d/\delta_h)}} 
\end{align}
In order to obtain a unified bound depending on one confidence parameter $1 - \delta$, one could optimize over $\delta_b$ and $\delta_h$, subject to $\delta_b + \delta_h = \delta$. This leads to a messy result and a negligible improvement. One can use simple values $\delta_b = \delta_h = \delta / 2$, so the overall bound becomes, with probability $1 - \delta$:
\[
  \|\mathbf{S}_n^{-1} - (\bbE\mathbf{S}_n)^{-1}\|
  \leq \frac{1}{n} \| (\bbE\mathbf{X}_1)^{-1} \|^2 \sqrt{\frac{2\sigma^2}{n}\log\left(\frac{2d}{\delta}\right)}
  \left(\frac{1 + \sqrt{(L^2/18\sigma^2 n)\log(2d/\delta)}}{1 - \sqrt{(2L/n)\|(\bbE\mathbf{X}_1)^{-1}\| \log(2d/\delta)}}\right).
\]
This can finally be formulated as follows:
\[
  \|\mathbf{S}_n^{-1} - (\bbE\mathbf{S}_n)^{-1}\|
  \leq \frac{1}{n} \| (\bbE\mathbf{X}_1)^{-1} \|^2\sqrt{\frac{2\sigma^2}{n}\log\left(\frac{2d}{\delta}\right)} + o\left(\frac{1}{n\sqrt{n}}\right).
\]
The final result yields using the fact that $\max_{\mathbf{x} \in \mathcal {X}} \|\mathbf{x}\|^2 = L$ and $f^{\star}_{G,n} = f^{\star}_{D,n} = \frac{d}{n}$.

The bound on $f_E(\mathbf{S}_n^{-1})$ is obtained similarly, only using the Hoeffding's result on minimum eigenvalue.
\end{proof}
\section{A refined approach of the dimension}
\label{sec:intdim}

\subsection{Intrinsic dimension}
\label{sec:intrinsic-dimension}

\begin{definition}[Intrinsic dimension]
  Let $d > 0$ and $\mathbf{S}_n \in \bbR^{d \times d}$ be a positive semi-definite matrix. The intrisic dimension of $\mathbf{S}_n$, denoted $\intdim(\mathbf{S}_n)$, is defined as follows:
  \[
    \intdim(\mathbf{S}_n) \triangleq \frac{\trace(\mathbf{S}_n)}{\|\mathbf{S}_n\|}.
  \]
  One always has $1 \leq \intdim(\mathbf{S}_n) \leq d$.
\end{definition}

As for the regular concentration proofs, we will need two useful lemmas: one for deriving a nicer upperbound from Markov's inequality and the other for forcing the intrinsic dimension into the bound.

\begin{lemma}
  \label{lma:intdim-markov}
  Let $\mathbf{Z} \in \bbR^d$ be a random Hermitian matrix and let $\psi : \bbR \rightarrow \bbR_+$ be non-decreasing and non-negative. Then, for any $t \in \bbR$ such that $\psi(t) > 0$, one has:
  \[
    \bbP(\| \mathbf{Z} \| \geq t) \leq \frac{1}{\psi(t)} \bbE \trace \left( \psi(\mathbf{Z}) \right).
  \]
\end{lemma}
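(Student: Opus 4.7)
The plan is to carry out the matrix analogue of the classical Markov/Chebyshev trick in two steps, and then combine them by taking expectations.

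First, I would apply the scalar Markov inequality to the random variable $\psi(\|\mathbf{Z}\|)$. Since $\psi$ is non-decreasing and non-negative and $\psi(t) > 0$, the event $\{\|\mathbf{Z}\| \geq t\}$ is contained in $\{\psi(\|\mathbf{Z}\|) \geq \psi(t)\}$, and Markov's inequality for the non-negative random variable $\psi(\|\mathbf{Z}\|)$ yields
\[
  \bbP\bigl(\|\mathbf{Z}\| \geq t\bigr) \leq \bbP\bigl(\psi(\|\mathbf{Z}\|) \geq \psi(t)\bigr) \leq \frac{1}{\psi(t)}\,\bbE\,\psi(\|\mathbf{Z}\|).
\]

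The second step is the deterministic pointwise bound $\psi(\|\mathbf{Z}\|) \leq \trace\,\psi(\mathbf{Z})$, which when combined with the first step and the monotonicity of the expectation produces the stated inequality. To prove it, write the spectral decomposition $\mathbf{Z} = \sum_{i=1}^{d} \lambda_i \mathbf{u}_i \mathbf{u}_i^{\top}$; the matrix functional calculus gives $\psi(\mathbf{Z}) = \sum_{i=1}^{d} \psi(\lambda_i)\,\mathbf{u}_i \mathbf{u}_i^{\top}$, so $\trace\,\psi(\mathbf{Z}) = \sum_{i=1}^{d} \psi(\lambda_i)$. Every summand is non-negative by hypothesis on $\psi$, so the sum dominates its maximum, and monotonicity of $\psi$ converts $\|\mathbf{Z}\| = \lambda_{\max}(\mathbf{Z})$ into $\psi(\|\mathbf{Z}\|) = \max_i \psi(\lambda_i) \leq \sum_i \psi(\lambda_i) = \trace\,\psi(\mathbf{Z})$.

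The one subtlety, really more bookkeeping than obstacle, is the identification $\|\mathbf{Z}\| = \lambda_{\max}(\mathbf{Z})$. For a general Hermitian matrix $\|\mathbf{Z}\| = \max_i |\lambda_i|$, and the non-decreasing hypothesis on $\psi : \bbR \to \bbR_+$ does not by itself control $\psi(|\lambda_i|)$ by $\psi(\lambda_i)$. However, every use of this lemma in the paper instantiates $\mathbf{Z}$ as an exponential $\exp(\eta \mathbf{S})$ or a comparable positive semi-definite matrix, for which the spectral norm does coincide with the largest eigenvalue, so the argument above applies without modification. If one wished to be pedantic, the lemma could equivalently be stated with $\lambda_{\max}(\mathbf{Z})$ in place of $\|\mathbf{Z}\|$ at no cost to its downstream uses in the intrinsic-dimension concentration bound.
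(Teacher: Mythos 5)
Your proof is correct and follows essentially the same route as the paper's: event inclusion from the monotonicity of $\psi$, the pointwise bound $\psi(\|\mathbf{Z}\|) \leq \trace\big(\psi(\mathbf{Z})\big)$ coming from the non-negativity of $\psi$ on the spectrum, and Markov's inequality. Your caveat about $\|\mathbf{Z}\|$ versus $\lambda_{\mathrm{max}}(\mathbf{Z})$ for indefinite Hermitian matrices is a legitimate gap that the paper's own proof also glosses over, and, as you observe, it is harmless since every downstream use instantiates $\mathbf{Z}$ as a positive semi-definite matrix.
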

\begin{proof}
  Let $t \in \bbR$. Since $\psi$ is non-decreasing, the event $\{ \| \mathbf{Z} \| \geq t \}$ contains $\{ \psi(\| \mathbf{Z} \|) \geq \psi(t) \}$. In addition, using the definition of $\psi(\mathbf{Z})$, one can easily notice that $\psi(\mathbf{Z}) \succeq \mathbf{0}$ and $\|\psi(\mathbf{Z})\| \geq \psi(\|\mathbf{Z}\|)$. Therefore, one can write:
  \[
    \bbP(\| \mathbf{Z} \| \geq t) \leq \bbP(\| \psi(\mathbf{Z}) \| \geq \psi(t)) \leq \bbP(\trace(\psi(\mathbf{Z})) \geq \psi(t)),
  \]
  where we used the fact that $\psi(\mathbf{Z}) \succeq \mathbf{0}$ in the rightmost inequality. Finally, one can conclude using Markov's inequality.
\end{proof}

\begin{lemma}
  \label{lma:intdim-cvx}
  Let $\varphi : \bbR \mapsto \bbR$ be a convex function and let $\mathbf{Z}$ be a positive semi-definite matrix. Then, one has:
  \[
    \trace \big(\varphi(\mathbf{Z})\big) \leq \intdim(\mathbf{Z}) \varphi\big(\|\mathbf{Z}\|\big) + (d - \intdim(\mathbf{Z})) \varphi(0).
  \]
  In particular, if $\varphi(0) = 0$, one has:
  \[
    \trace \big(\varphi(\mathbf{Z})\big) \leq \intdim(\mathbf{Z}) \varphi\big(\|\mathbf{Z}\|\big).
  \]
\end{lemma}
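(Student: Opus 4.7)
The plan is to reduce the matrix inequality to its scalar counterpart via the spectral decomposition, then exploit convexity of $\varphi$ on the interval $[0, \|\mathbf{Z}\|]$ in which the eigenvalues of $\mathbf{Z}$ live.

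First, I would decompose $\mathbf{Z} = \mathbf{P} \mathbf{D} \mathbf{P}^{\top}$ where $\mathbf{D}$ is diagonal with entries $\lambda_1, \ldots, \lambda_d$, the eigenvalues of $\mathbf{Z}$. Since $\mathbf{Z}$ is positive semi-definite, each eigenvalue satisfies $0 \leq \lambda_i \leq \|\mathbf{Z}\|$. By the definition of $\varphi$ applied to a Hermitian matrix (given in the appendix), we have $\trace\big(\varphi(\mathbf{Z})\big) = \sum_{i=1}^{d} \varphi(\lambda_i)$, so the problem reduces to bounding this sum.

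Next, for each $i$, since $\lambda_i \in [0, \|\mathbf{Z}\|]$, I can write
\[
\lambda_i = \left(1 - \frac{\lambda_i}{\|\mathbf{Z}\|}\right) \cdot 0 + \frac{\lambda_i}{\|\mathbf{Z}\|} \cdot \|\mathbf{Z}\|,
\]
expressing $\lambda_i$ as a convex combination of the endpoints $0$ and $\|\mathbf{Z}\|$. The convexity of $\varphi$ then yields
\[
\varphi(\lambda_i) \leq \left(1 - \frac{\lambda_i}{\|\mathbf{Z}\|}\right) \varphi(0) + \frac{\lambda_i}{\|\mathbf{Z}\|} \varphi\big(\|\mathbf{Z}\|\big).
\]
Summing this inequality over $i = 1, \ldots, d$ and using $\sum_i \lambda_i = \trace(\mathbf{Z})$ together with the definition $\intdim(\mathbf{Z}) = \trace(\mathbf{Z})/\|\mathbf{Z}\|$, I obtain exactly
\[
\trace\big(\varphi(\mathbf{Z})\big) \leq \big(d - \intdim(\mathbf{Z})\big)\varphi(0) + \intdim(\mathbf{Z}) \varphi\big(\|\mathbf{Z}\|\big),
\]
which is the desired bound. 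The second inequality is immediate by setting $\varphi(0) = 0$.

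There is essentially no serious obstacle here: the argument is a one-line application of Jensen/convexity combined with a spectral reduction. The only mild subtlety worth flagging is that the bound genuinely uses positive semi-definiteness of $\mathbf{Z}$ (so that $0$ lies below the spectrum) — without it, one would need to replace $0$ by $\lambda_{\min}(\mathbf{Z})$, which is precisely the refinement leading to $\updim$ and $\lowdim$ introduced in Section~\ref{subsec:intdim}.
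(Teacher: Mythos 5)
Your proof is correct and follows essentially the same route as the paper: both rest on the convexity bound $\varphi(x) \leq (1 - x/\|\mathbf{Z}\|)\varphi(0) + (x/\|\mathbf{Z}\|)\varphi(\|\mathbf{Z}\|)$ on the spectrum, the only difference being that you make the passage to matrices explicit via diagonalization and eigenvalue summation, whereas the paper invokes its trace-monotonicity lemma to the same effect.
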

\begin{proof}
  Let $0 \leq x \leq \| \mathbf{Z} \|$. By convexity of $\varphi$, we can write:
  \[
    \varphi(x) \leq \varphi(0) + \big(\varphi(\| \mathbf{Z} \|) - \varphi(0)\big) \frac{x}{\|\mathbf{Z}\|}.
  \]
  Using Lemma~\ref{lma:trace-monotonicity}, we can extend the above inequality to $\mathbf{Z}$:
  \[
    \trace \big(\varphi(\mathbf{Z})\big) \leq \trace\big(\varphi(0)\mathbf{I}\big) + \frac{\varphi(\| \mathbf{Z} \|) - \varphi(0)}{\|\mathbf{Z}\|} \trace \big(\mathbf{Z}\big),
  \]
  which can be rearranged as follows:
  \[
    \trace \big(\varphi(\mathbf{Z})\big) \leq \intdim(\mathbf{Z}) \varphi\big(\|\mathbf{Z}\|\big) + (d - \intdim(\mathbf{Z})) \varphi(0),
  \]
  and the result holds.
\end{proof}

Using the two previous lemmas, we can adapt the proof in Hoeffding in order to obtain an improved bound with the intrinsic dimension.
\begin{theorem}
  \label{thm:hoef-intdim}
  Let $L > 0$ and let $\mathbf{X}_1, \ldots, \mathbf{X}_n$ be i.i.d.\ random matrices such that $\mathbf{0} \preceq \mathbf{X}_1 \preceq L\mathbf{I}$. Let $\mathbf{S}_n = \sum_{i = 1}^n \mathbf{X}_i$. For any $0 < \varepsilon < 1$, one can upperbound $\| \mathbf{S}_n \|$ as follows:
  \[
    \bbP(\|\mathbf{S}_n\| \geq (1 + \varepsilon) \|\bbE \mathbf{S}_n \| )\leq
    2 \times \intdim(\mathbf{Z}) \left(\frac{e^{\varepsilon}}{(1 + \varepsilon)^{1 + \varepsilon}}\right)^{\frac{\|\bbE \mathbf{S}_n\|}{L}}.
  \]
\end{theorem}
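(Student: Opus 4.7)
The plan is to reproduce the Chernoff-style argument from the proof of Theorem~\ref{thm:chernoff-matrix}, swapping the crude $\|\cdot\| \leq \trace$ and direct Markov step for the two refined ingredients available here: Lemma~\ref{lma:intdim-markov} (Markov applied through $\trace \circ \psi$) and Lemma~\ref{lma:intdim-cvx} (trace inequality whose prefactor is $\intdim$, not $d$). The only genuine novelty relative to Theorem~\ref{thm:chernoff-matrix} is that these two refinements must be chained using a carefully \emph{shifted} exponential, so that the convex function fed into Lemma~\ref{lma:intdim-cvx} vanishes at the origin and the extra additive $d - \intdim$ term---which would otherwise reinflate the bound back to the original prefactor $d$---disappears.

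First I would apply Lemma~\ref{lma:intdim-markov} to $\mathbf{S}_n$ with the non-decreasing, non-negative function $\psi(x) = (e^{\eta x} - 1)_+$ for some $\eta > 0$. Since $\mathbf{S}_n \succeq \mathbf{0}$ almost surely its spectrum is non-negative, so the clipping is vacuous and $\psi(\mathbf{S}_n) = \exp(\eta \mathbf{S}_n) - \mathbf{I}$, giving
\[
  \bbP\big(\|\mathbf{S}_n\| \geq t\big) \leq \frac{1}{e^{\eta t} - 1}\big(\bbE \trace \exp(\eta \mathbf{S}_n) - d\big).
\]
Next I would reuse verbatim the Lieb iteration from the proof of Theorem~\ref{thm:master-bound}, combined with Lemma~\ref{lma:mgf-bound} and monotonicity of $\log$, to obtain $\bbE \trace \exp(\eta \mathbf{S}_n) \leq \trace \exp(\mathbf{Y})$, where $\mathbf{Y} \triangleq \frac{e^{\eta L} - 1}{L}\bbE\mathbf{S}_n$. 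Subtracting $d = \trace \mathbf{I}$ and applying Lemma~\ref{lma:intdim-cvx} to $\varphi(x) = e^x - 1$, which is convex and satisfies $\varphi(0) = 0$, yields
\[
  \bbE \trace \exp(\eta \mathbf{S}_n) - d \leq \trace\big(\exp(\mathbf{Y}) - \mathbf{I}\big) \leq \intdim(\mathbf{Y})\big(e^{\|\mathbf{Y}\|} - 1\big),
\]
and I would use the scale invariance $\intdim(c \mathbf{M}) = \intdim(\mathbf{M})$ for $c > 0$ to replace $\intdim(\mathbf{Y})$ by $\intdim(\bbE\mathbf{S}_n)$.

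Finally I would optimize over $\eta$. Using $(e^{\|\mathbf{Y}\|} - 1)/(e^{\eta t} - 1) \leq e^{\|\mathbf{Y}\| - \eta t}/(1 - e^{-\eta t})$, setting $t = (1+\varepsilon)\|\bbE\mathbf{S}_n\|$ and choosing $\eta = L^{-1}\log(1+\varepsilon)$ exactly as in Theorem~\ref{thm:chernoff-matrix}, the factor $e^{\|\mathbf{Y}\| - \eta t}$ collapses to $\big(e^{\varepsilon}/(1+\varepsilon)^{1+\varepsilon}\big)^{\|\bbE\mathbf{S}_n\|/L}$, while in the non-trivial tail regime $\eta t \geq \log 2$ the denominator satisfies $1/(1 - e^{-\eta t}) \leq 2$. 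Stitching everything together yields the claimed inequality, the constant $2$ being exactly the cost of passing from the shifted Markov denominator $e^{\eta t}-1$ back to $e^{\eta t}$.

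The only real subtlety beyond bookkeeping is the \emph{matched} shift by $\mathbf{I}$: the naive choice $\psi(x) = e^{\eta x}$ with $\varphi(x) = e^x$ in Lemma~\ref{lma:intdim-cvx} produces an additive $(d - \intdim(\mathbf{Y}))\varphi(0)$ residual which, after the same optimization, reinstates a prefactor of $d$ and erases the whole gain. It is precisely the $-\mathbf{I}$ correction on both sides that kills this residual and lets the intrinsic dimension survive in the final tail bound.
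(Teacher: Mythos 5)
Your proof is correct and follows essentially the same route as the paper's: Lemma~\ref{lma:intdim-markov} with the shifted function $\psi(x) = (e^{\eta x}-1)_+$, Lemma~\ref{lma:intdim-cvx} with $\varphi(x) = e^x - 1$ so that the $(d - \intdim)\varphi(0)$ residual vanishes, and the usual Chernoff optimization with the factor $e^{\eta t}/(e^{\eta t}-1) \leq 2$. The only deviation is that you bound $\bbE\trace\exp(\eta\mathbf{S}_n)$ via the Lieb iteration of Theorem~\ref{thm:master-bound} applied to the individual $\mathbf{X}_i$ (giving the matrix $\frac{e^{\eta L}-1}{L}\bbE\mathbf{S}_n$), whereas the paper applies the scalar convexity bound directly to the sum $\mathbf{S}_n \preceq nL\mathbf{I}$ and then re-exponentiates; your variant is in fact the cleaner one and delivers the stated exponent $\|\bbE\mathbf{S}_n\|/L$ without further adjustment.
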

\begin{proof}
  Let $t, \eta > 0$. Using Lemma~\ref{lma:intdim-markov} with $\psi : x \in \bbR \mapsto (e^{\eta x} - 1)_+$ yields:
  \begin{equation}
    \label{eq:intdim-first-markov}
    \bbP\left( \| \mathbf{S}_n \| \geq t \right) \leq \frac{1}{e^{\eta t} - 1}\bbE \trace \big(( \exp(\eta \mathbf{S}_n) - \mathbf{I})_+\big) = \frac{1}{e^{\eta t} - 1} \trace \left( \exp(\eta \mathbf{S}_n) - \mathbf{I} \right),
  \end{equation}
  where we used the fact that $\mathbf{S}_n \succeq \mathbf{0}$ implies $\exp(\eta \mathbf{S}_n) \succeq \mathbf{I}$. Let $0 \leq x \leq L$, by convexity of the exponential, one has:
  \[
    e^{\eta x} - 1 \leq \big(e^{\eta L} - 1\big) \frac{x}{L}.
  \]
  Once again, we can extend this result to $\mathbf{S}_n$ and obtain:
  \[
    \trace\big( \exp(\eta \mathbf{S}_n) - \mathbf{I} \big) \leq \trace\left(\frac{e^{\eta L} - 1}{L} \mathbf{S}_n\right).
  \]
  Taking the expectation and using the inequality $x \leq e^x - 1$ yields:
  \[
    \bbE\trace\big(\exp(\eta \mathbf{S}_n) - \mathbf{I}\big) \leq \trace\left(\frac{e^{\eta nL} - 1}{nL}\bbE\mathbf{S}_n\right) \leq \trace\left(\exp\left(\frac{e^{\eta nL} - 1}{nL}\bbE\mathbf{S}_n\right) - \mathbf{I} \right)
  \]
  We can now use Lemma~\ref{lma:intdim-cvx} with $\varphi : x \in \bbR \mapsto e^x - 1$ to obtain a bound depending on $\| \bbE \mathbf{S}_n \|$:
  \[
    \bbE\trace\big(\exp(\eta \mathbf{S}_n) - \mathbf{I}\big) \leq \trace\left(\exp\left(\frac{e^{\eta nL} - 1}{nL} \bbE\mathbf{S}_n\right) - \mathbf{I} \right) \leq \intdim(\bbE \mathbf{S}_n) \left(e^{\frac{e^{\eta nL} - 1}{nL} \| \bbE\mathbf{S}_n \|} - 1\right).
  \]
  Combining the previous inequality with~\eqref{eq:intdim-first-markov} yields:
  \[
    \bbP\left( \| \mathbf{S}_n \| \geq t \right) \leq \intdim(\bbE \mathbf{S}_n) \times  \frac{e^{\frac{e^{\eta nL} - 1}{nL} \| \bbE\mathbf{S}_n \|} - 1}{e^{\eta t} - 1} \leq \intdim(\bbE \mathbf{S}_n) \times  \frac{e^{\eta t}}{e^{\eta t} - 1} \cdot e^{-\eta t + \frac{e^{\eta nL} - 1}{nL} \| \bbE\mathbf{S}_n \|}.
  \]
  The remainder of the proof consists in bounding $e^{\eta t}/(e^{\eta t} - 1)$ by $2$ and the rightmost term as in the regular Hoeffding's proof (see \cite{tropp2015introduction} for additional details).
\end{proof}

There are two main differences between this version of the Hoeffding's bound and the regular one. First, there is a factor 2 with the intrinsic dimension. This a not necessarily a big deal as we can win on other aspects. Then, we have obtained a bound on the highest eigenvalue, but not on the lowest. This is due to the current definition of the intrinsic dimension: we use this limitation as a motivation for the refinement we propose in the next section.

\subsection{A refined approach of the intrinsic dimension}
\label{sec:intdim-plus}

\begin{definition}[Upper and lower intrinsic dimension]
  Let $d > 0$ and $\mathbf{S}_n \in \bbR^{d \times d}$ be a positive semi-definite matrix. The upper and lower intrisic dimensions of $\mathbf{S}_n$, denoted $\updim(\mathbf{S}_n)$ and $\lowdim(\mathbf{S}_n)$ respectively, are defined as follows:
  \[
    \left\{
      \begin{array}{rcl}
        \updim(\mathbf{S}_n) &\triangleq &\dfrac{\trace\big(\mathbf{S}_n - \lambda_{\mathrm{min}}(\mathbf{S}_n) \mathbf{I}\big)}{\|\mathbf{S}_n\| - \lambda_{\mathrm{min}}(\mathbf{S}_n)}\\[1em]
        \lowdim(\mathbf{S}_n) &\triangleq &\dfrac{\trace\big(\| \mathbf{S}_n \| \mathbf{I} - \mathbf{S}_n\big)}{\|\mathbf{S}_n\| - \lambda_{\mathrm{min}}(\mathbf{S}_n)} = d - \updim(\mathbf{S}_n).       
      \end{array}
    \right.
  \]
  One always has $1 \leq \updim(\mathbf{S}_n), \lowdim(\mathbf{S}_n) \leq d - 1$.
\end{definition}
  
This definition brings a different information about the matrix at stake: instead of renormalizing the trace using the spectral norm, we also shift it using the lowest eigenvalue. With these new quantities, we are able to formulate a refined version of Lemma~\ref{lma:intdim-cvx}.
\begin{lemma}
  \label{lma:updim-cvx}
  Let $\varphi : \bbR \mapsto \bbR$ be a convex function and let $\mathbf{Z}$ be a positive semi-definite matrix. Then, one has:
  \[
    \trace \big(\varphi(\mathbf{Z})\big) \leq \updim(\mathbf{Z}) \varphi\big(\|\mathbf{Z}\|\big) + \lowdim(\mathbf{Z}) \varphi(\lambda_{\mathrm{min}}(\mathbf{Z})).
  \]
  This bound is always tighter than the one with the intrinsic dimension, that is:
  \[
    \updim(\mathbf{Z}) \varphi\big(\|\mathbf{Z}\|\big) + \lowdim(\mathbf{Z}) \varphi(\lambda_{\mathrm{min}}(\mathbf{Z})) \leq \intdim(\mathbf{Z}) \varphi\big(\|\mathbf{Z}\|\big) + (d - \intdim(\mathbf{Z})) \varphi(0).
  \]
\end{lemma}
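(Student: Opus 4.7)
The plan is to mimic the proof of Lemma~\ref{lma:intdim-cvx}, but replace the secant bound taken on the interval $[0, \|\mathbf{Z}\|]$ with a tighter secant bound taken on the interval $[\lambda_{\mathrm{min}}(\mathbf{Z}), \|\mathbf{Z}\|]$, which still contains the entire spectrum of $\mathbf{Z}$. Explicitly, I would first write the scalar convexity inequality
\[
  \varphi(x) \leq \frac{\|\mathbf{Z}\| - x}{\|\mathbf{Z}\| - \lambda_{\mathrm{min}}(\mathbf{Z})}\,\varphi\bigl(\lambda_{\mathrm{min}}(\mathbf{Z})\bigr) + \frac{x - \lambda_{\mathrm{min}}(\mathbf{Z})}{\|\mathbf{Z}\| - \lambda_{\mathrm{min}}(\mathbf{Z})}\,\varphi\bigl(\|\mathbf{Z}\|\bigr),
\]
valid for $x \in [\lambda_{\mathrm{min}}(\mathbf{Z}), \|\mathbf{Z}\|]$ by convexity of $\varphi$.

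Next, since $\mathrm{Sp}(\mathbf{Z}) \subseteq [\lambda_{\mathrm{min}}(\mathbf{Z}), \|\mathbf{Z}\|]$, the transfer rule (Lemma~\ref{lma:transfer-rule}) extends this scalar inequality into a Loewner inequality on $\varphi(\mathbf{Z})$. Taking the trace of both sides, grouping the terms that depend on $\mathbf{Z}$, and recognizing
\[
  \frac{\trace(\mathbf{Z} - \lambda_{\mathrm{min}}(\mathbf{Z})\mathbf{I})}{\|\mathbf{Z}\| - \lambda_{\mathrm{min}}(\mathbf{Z})} = \updim(\mathbf{Z}), \qquad d - \updim(\mathbf{Z}) = \lowdim(\mathbf{Z}),
\]
immediately yields the first claimed inequality after one line of rearrangement.

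For the comparison with the intrinsic dimension bound, the key observation is that both bounds are obtained by overestimating $\varphi$ on the spectrum by an affine function that equals $\varphi(\|\mathbf{Z}\|)$ at $\|\mathbf{Z}\|$; the intrinsic dimension bound uses the affine function interpolating $(0, \varphi(0))$ and $(\|\mathbf{Z}\|, \varphi(\|\mathbf{Z}\|))$, while the refined bound uses the affine function interpolating $(\lambda_{\mathrm{min}}(\mathbf{Z}), \varphi(\lambda_{\mathrm{min}}(\mathbf{Z})))$ and $(\|\mathbf{Z}\|, \varphi(\|\mathbf{Z}\|))$. Convexity of $\varphi$ gives $\varphi(\lambda_{\mathrm{min}}(\mathbf{Z})) \leq \varphi(0) + (\varphi(\|\mathbf{Z}\|) - \varphi(0))\lambda_{\mathrm{min}}(\mathbf{Z})/\|\mathbf{Z}\|$, so the second affine function lies pointwise below the first on $[\lambda_{\mathrm{min}}(\mathbf{Z}), \|\mathbf{Z}\|]$ (they agree at $\|\mathbf{Z}\|$ and the second is no larger at $\lambda_{\mathrm{min}}(\mathbf{Z})$). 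Summing these two affine upper bounds over the $d$ eigenvalues of $\mathbf{Z}$ and using the definitions of $\intdim$, $\updim$, and $\lowdim$ produces exactly the comparison inequality.

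There is no substantial obstacle here: the argument is essentially two applications of convexity plus the transfer rule. The only subtle point is to make sure the scalar inequality is written so that the right-hand side is automatically affine (hence extends under the transfer rule without further work) and that when we finally expand $\trace(\lambda_{\mathrm{min}}(\mathbf{Z})\mathbf{I}) = d\,\lambda_{\mathrm{min}}(\mathbf{Z})$, the identity $d - \updim(\mathbf{Z}) = \lowdim(\mathbf{Z})$ is used to package the constant term into the $\varphi(\lambda_{\mathrm{min}}(\mathbf{Z}))$ contribution cleanly.
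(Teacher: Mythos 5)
Your proof is correct and follows essentially the same route as the paper: a secant (chord) upper bound for $\varphi$ over the interval $[\lambda_{\mathrm{min}}(\mathbf{Z}), \|\mathbf{Z}\|]$ containing the spectrum, extended to the trace via the transfer rule, with the comparison to the intrinsic-dimension bound following because the chord over the smaller interval lies below the chord over $[0, \|\mathbf{Z}\|]$ by convexity. The paper merely packages both cases by parametrizing the left endpoint $l \in [0, \lambda_{\mathrm{min}}(\mathbf{Z})]$ and specializing, which is a cosmetic difference.
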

\begin{proof}
  To prove both assertions, we will show a more general bound, of the form:
  \[
    \trace \big(\varphi(\mathbf{Z})\big) \leq f(l),
  \]
  for $0 \leq l \leq \lambda_{\min}(\mathbf{Z})$ and then we will show that $f$ is non-increasing. Let $0 \leq l \leq \lambda_{\mathrm{min}}(\mathbf{Z})$ and let $l \leq x \leq \| \mathbf{Z} \|$. Using the convexity of $\varphi$, one can write:
  \[
    \varphi(x) \leq \varphi(l) + (\varphi(\|\mathbf{Z}\|) - \varphi(l)) \frac{x - l}{\|\mathbf{Z}\| - l} = \frac{x - l}{\|\mathbf{Z}\| - l} \varphi(\|\mathbf{Z}\|) + \frac{\|\mathbf{Z}\| - x}{\|\mathbf{Z}\| - l} \varphi(l).
  \]
  Using Lemma~\ref{lma:trace-monotonicity} again, we can extend the above inequality to $\mathbf{Z}$:
  \[
    \trace \big(\varphi(\mathbf{Z})\big) \leq  \frac{\trace(\mathbf{Z} - l\mathbf{I})}{\|\mathbf{Z}\| - l} \varphi(\|\mathbf{Z}\|) + \frac{\trace(\|\mathbf{Z}\|\mathbf{I} - \mathbf{Z})}{\|\mathbf{Z}\| - l} \varphi(l).
  \]
  It is immediate to see that taking $l = 0$ leads to Lemma~\ref{lma:intdim-cvx} and $l = \lambda_{\mathrm{min}}(\mathbf{Z})$ shows the first assertion of this lemma. The last assertion of the theorem just comes from the convexity of $\varphi$: when applying the convexity bound on two segments $\mathcal{I} \subseteq \mathcal{J}$, the bound on $\mathcal{I}$ is necessarily tighter than the bound on $\mathcal{J}$.
\end{proof}

\begin{theorem}
  \label{thm:chernoff-matrix-int}
  Let $\mathbf{X}_1, \ldots, \mathbf{X}_n$ be i.i.d.\ positive semidefinite random matrices, such that there exists $L > 0$ verifying $\mathbf{0} \preceq \mathbf{X}_1 \preceq L \mathbf{I}$. Let $\mathbf{S}_n$ be defined as:
  \[
    \mathbf{S}_n \triangleq \sum_{i = 1}^n \mathbf{X}_i.
  \]
  In addition, let $\kappa$ be the condition number of $\bbE\mathbf{S}_n$, that is
  \[
    \kappa \triangleq \frac{\| \bbE \mathbf{S}_n \|}{\lambda_{\mathrm{min}}(\bbE \mathbf{S}_n)}.
  \]
  Then, for any $0 < \varepsilon < 1$, one can lowerbound $\lambda_{\mathrm{min}}(\mathbf{S}_n)$ as follows:
  \[
    \bbP(\lambda_{\mathrm{min}}(\mathbf{S}_n) \leq (1 - \varepsilon) \lambda_{\mathrm{min}}(\bbE \mathbf{S}_n)) \leq
    \tilde{d}_{\mathrm{min}} \left(\frac{e^{-\varepsilon}}{(1 - \varepsilon)^{1 - \varepsilon}}\right)^{\frac{n\lambda_{\mathrm{min}}(\bbE \mathbf{X}_1)}{L}},
  \]
  where
  \[
    \tilde{d}_{\mathrm{min}} = \lowdim(\bbE\mathbf{S}_n)
    + \updim(\bbE\mathbf{S}_n) e^{-n\varepsilon \lambda_{\mathrm{min}}(\bbE \mathbf{X}_1)(\kappa -1)/L}.    
  \]
  Similarly, one can upperbound $\| \mathbf{S}_n \|$ as follows:
  \[
    \bbP(\|\mathbf{S}_n\| \geq (1 + \varepsilon) \|\bbE \mathbf{S}_n \| )\leq
    \tilde{d}_{\mathrm{max}} \left(\frac{e^{\varepsilon}}{(1 + \varepsilon)^{1 + \varepsilon}}\right)^{\frac{n\|\bbE \mathbf{X}_1\|}{L}},
  \]
  where
  \[
    \tilde{d}_{\mathrm{max}} = \updim(\bbE\mathbf{S}_n)
    + \lowdim(\bbE\mathbf{S}_n) e^{-n\varepsilon \|\bbE \mathbf{X}_1\|(1 - \kappa^{-1})/L}.    
  \]
\end{theorem}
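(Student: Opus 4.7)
The plan is to follow the blueprint of Theorem~\ref{thm:hoef-intdim} while substituting the refined trace bound from Lemma~\ref{lma:updim-cvx} in place of Lemma~\ref{lma:intdim-cvx} at the critical convexity step, then to obtain the $\lambda_{\mathrm{min}}$ statement by a symmetric argument with $\eta<0$.

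For the upper tail on $\|\mathbf{S}_n\|$, I would first apply Lemma~\ref{lma:intdim-markov} with $\psi(x) = (e^{\eta x}-1)_+$ for $\eta > 0$, yielding
\[
\bbP(\|\mathbf{S}_n\|\geq t) \;\leq\; \frac{1}{e^{\eta t}-1}\,\bbE\trace\bigl(\exp(\eta \mathbf{S}_n)-\mathbf{I}\bigr).
\]
Next, reproduce the chain of Theorem~\ref{thm:hoef-intdim}: use $e^{\eta x}-1 \leq (e^{\eta L}-1)x/L$ on $[0,L]$, invoke the transfer rule (Lemma~\ref{lma:transfer-rule}) and Lieb's theorem (via Theorem~\ref{thm:master-bound}) to pull the expectation inside, obtaining a bound of the form $\trace\bigl(\exp(C\,\bbE\mathbf{S}_n)-\mathbf{I}\bigr)$ with $C = C(\eta)$ explicit in $\eta$ and $L$. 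The new ingredient is then Lemma~\ref{lma:updim-cvx} applied to $\varphi(x)=e^x-1$, which gives
\[
\trace\bigl(\exp(C\,\bbE\mathbf{S}_n)-\mathbf{I}\bigr) \;\leq\; \updim(\bbE\mathbf{S}_n)\bigl(e^{C\|\bbE\mathbf{S}_n\|}-1\bigr) + \lowdim(\bbE\mathbf{S}_n)\bigl(e^{C\lambda_{\mathrm{min}}(\bbE\mathbf{S}_n)}-1\bigr).
\]
Finally, one plugs in the standard optimal Chernoff choice $\eta^\star = L^{-1}\log(1+\varepsilon)$ associated with $t=(1+\varepsilon)\|\bbE\mathbf{S}_n\|$, bounds the prefactor $e^{\eta^\star t}/(e^{\eta^\star t}-1)$ by a harmless constant exactly as in Theorem~\ref{thm:hoef-intdim}, and rewrites the penalty on the $\lowdim$ term through the identity $\lambda_{\mathrm{min}}(\bbE\mathbf{S}_n)=\kappa^{-1}\|\bbE\mathbf{S}_n\|$, which produces the factor $\exp(-n\varepsilon\|\bbE\mathbf{X}_1\|(1-\kappa^{-1})/L)$ exactly as stated in $\tilde d_{\mathrm{max}}$.

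For the lower tail on $\lambda_{\mathrm{min}}(\mathbf{S}_n)$, I would use the mirror argument with $\eta<0$, exploiting the identity $\|\exp(\eta\mathbf{S}_n)\| = e^{\eta\lambda_{\mathrm{min}}(\mathbf{S}_n)}$ for $\eta<0$ to reduce to Markov on a spectral norm, then repeating the MGF control of Lemma~\ref{lma:mgf-bound} (which holds verbatim for $\eta<0$ since the argument $(e^{\eta L}-1)/L$ is simply negative) and again applying Lemma~\ref{lma:updim-cvx}. Because $C(\eta)<0$ here, the roles of $\|\cdot\|$ and $\lambda_{\mathrm{min}}$ are swapped inside the exponent: the $\lowdim$ term now carries the leading scalar Chernoff factor at $\eta^\star=L^{-1}\log(1-\varepsilon)$, while the $\updim$ term picks up the multiplicative penalty $\exp(-n\varepsilon\lambda_{\mathrm{min}}(\bbE\mathbf{X}_1)(\kappa-1)/L)$, matching $\tilde d_{\mathrm{min}}$. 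An alternative route — reducing to the upper tail via $\tilde{\mathbf{X}}_i = L\mathbf{I}-\mathbf{X}_i$ and $\tilde{\mathbf{S}}_n = nL\mathbf{I}-\mathbf{S}_n$, which satisfies $\updim(\bbE\tilde{\mathbf{S}}_n)=\lowdim(\bbE\mathbf{S}_n)$ and conversely — is conceptually appealing but the induced condition number $\tilde\kappa=(nL-\lambda_{\mathrm{min}}(\bbE\mathbf{S}_n))/(nL-\|\bbE\mathbf{S}_n\|)$ does not simplify to $\kappa$, so the direct signed-$\eta$ derivation gives the cleaner statement.

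The main obstacle I expect is the bookkeeping at the optimization step: carefully extracting the penalty exponents $(1-\kappa^{-1})$ and $(\kappa-1)$ from the gap $C\|\bbE\mathbf{S}_n\|-C\lambda_{\mathrm{min}}(\bbE\mathbf{S}_n)$ after substituting $\eta^\star$, and showing that the Markov prefactor $e^{\eta t}/(e^{\eta t}-1)$ can be absorbed into an innocuous constant (a factor of $2$, as in Theorem~\ref{thm:hoef-intdim}) without contaminating the $\tilde d$ factor. The signed-$\eta$ case needs extra care because one must verify that Lemma~\ref{lma:updim-cvx}, stated for PSD $\mathbf{Z}$, can be applied either directly to $-C\bbE\mathbf{S}_n$ shifted appropriately or through the surrogate $\exp(C\bbE\mathbf{S}_n)$ which is still PSD — this is where the asymmetry $\varphi(\|\mathbf{Z}\|)$ versus $\varphi(\lambda_{\mathrm{min}}(\mathbf{Z}))$ produces the swap of $\updim$ and $\lowdim$ between the two statements.
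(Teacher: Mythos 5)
Your overall strategy is the right one and, for the lower tail, essentially identical to the paper's: Chernoff with a signed $\eta$, the MGF bound of Lemma~\ref{lma:mgf-bound} (valid for $\eta<0$), Lemma~\ref{lma:updim-cvx} applied to the convex map $x\mapsto e^{g(\eta)x}$, the usual optimal $\eta^\star$, and the bookkeeping that turns $g(\eta^\star)\bigl(\|\bbE\mathbf{S}_n\|-\lambda_{\mathrm{min}}(\bbE\mathbf{S}_n)\bigr)$ into the $(\kappa-1)$ and $(1-\kappa^{-1})$ penalties. You also correctly identify why the reflection $L\mathbf{I}-\mathbf{X}_i$ is a dead end, and why $\updim$ and $\lowdim$ swap roles between the two tails.

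The one place you diverge from the paper is the upper tail, and it costs you a constant. You route through Lemma~\ref{lma:intdim-markov} with $\psi(x)=(e^{\eta x}-1)_+$, inherited from Theorem~\ref{thm:hoef-intdim}, and then must absorb the Markov prefactor $e^{\eta t}/(e^{\eta t}-1)\leq 2$. That device was only needed in Theorem~\ref{thm:hoef-intdim} because Lemma~\ref{lma:intdim-cvx} produces a spurious term $(d-\intdim(\mathbf{Z}))\varphi(0)$ unless $\varphi(0)=0$. With the refined Lemma~\ref{lma:updim-cvx} the second term is $\lowdim(\mathbf{Z})\varphi(\lambda_{\mathrm{min}}(\mathbf{Z}))$, which you keep anyway, so subtracting $\mathbf{I}$ buys nothing. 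The paper instead starts from the plain Chernoff bound
\[
\bbP\bigl(\|\mathbf{S}_n\|\geq t\bigr)\;\leq\; e^{-\eta t}\,\trace\exp\Bigl(\tfrac{e^{\eta L}-1}{L}\,\bbE\mathbf{S}_n\Bigr)
\]
(as in Theorem~\ref{thm:chernoff-matrix}) and applies Lemma~\ref{lma:updim-cvx} directly with $\varphi(x)=e^{g(\eta)x}$; no prefactor appears, which is why the stated bound is $\tilde d_{\mathrm{max}}(\cdots)$ and not $2\,\tilde d_{\mathrm{max}}(\cdots)$. As written, your argument proves the inequality only with that extra factor of $2$ (and, strictly, only once you have checked $e^{\eta^\star t}\geq 2$). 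The fix is simply to drop the $(e^{\eta x}-1)_+$ step and run the upper tail exactly as you already run the lower tail.
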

\begin{proof}
  The beggining of the proof is similar to the regular Hoeffding's proof so we can directly write, for $t, \eta > 0$:
  \[
    \bbP\left( \| \mathbf{S}_n \| \geq t \right) \leq e^{-\eta t}\trace\exp\left(\frac{e^{\eta L} - 1}{L}\bbE\mathbf{S}_n\right)
  \]
  Using Lemma~\ref{lma:updim-cvx} with $\varphi : x \in \bbR \mapsto e^{g(\eta)x}$ and $g : \eta \mapsto L^{-1}(e^{\eta L} - 1)$ yields:
  \begin{align*}
    \trace\exp\left(g(\eta)\bbE\mathbf{S}_n\right)
    &\leq \updim(\bbE\mathbf{S}_n) e^{g(\eta) \| \bbE\mathbf{S}_n \|}
      + \lowdim(\bbE\mathbf{S}_n) e^{g(\eta) \lambda_\mathrm{min}( \bbE\mathbf{S}_n )}\\
    &= \left(\updim(\bbE\mathbf{S}_n)
      + \lowdim(\bbE\mathbf{S}_n) e^{g(\eta) (\lambda_\mathrm{min}( \bbE\mathbf{S}_n ) - \| \bbE\mathbf{S}_n \|)}\right) e^{g(\eta) \| \bbE\mathbf{S}_n \|}.
  \end{align*}
  Using the same value for $\eta$ than in regular Hoeffding's proof thus yields:
  \[
    \bbP\left( \| \mathbf{S}_n \| \geq (1 + \varepsilon) \|\bbE\mathbf{S}_n\| \right) \leq \tilde{d} \left( \frac{e^\varepsilon}{(1 + \varepsilon)^{1 + \varepsilon}}\right)^{\|\bbE \mathbf{S}_n \|/ L}
  \]
  where
  \[
    \tilde{d} = \updim(\bbE\mathbf{S}_n)
    + \lowdim(\bbE\mathbf{S}_n) e^{g(\eta) (\lambda_\mathrm{min}( \bbE\mathbf{S}_n ) - \| \bbE\mathbf{S}_n \|)}.
  \]
  Pluging the value $\eta = L^{-1} \log(1 + \varepsilon)$ into $\tilde{d}$'s expression yields the result.
  The result on $\lambda_{\mathrm{min}}(\mathbf{S}_n)$ is very similar:
  \[
    \bbP\left( \lambda_{\mathrm{min}}(\mathbf{S}_n) \leq t \right) = \bbP\left( \|-\mathbf{S}_n)\| \geq -t \right) \leq e^{\eta t}\trace\exp\left(\frac{e^{-\eta L} - 1}{L}\bbE\mathbf{S}_n\right).
  \]
  Using the same reasoning, one obtains:
  \[
    \bbP\left( \lambda_{\mathrm{min}}(\mathbf{S}_n ) \leq (1 - \varepsilon) \|\bbE\mathbf{S}_n\| \right) \leq \tilde{d} \left( \frac{e^{-\varepsilon}}{(1 - \varepsilon)^{1 - \varepsilon}}\right)^{\lambda_{\min}(\bbE \mathbf{S}_n )/ L}
  \]
  where
  \[
    \tilde{d} = \lowdim(\bbE\mathbf{S}_n)
    + \updim(\bbE\mathbf{S}_n) e^{g(-\eta) (\| \bbE\mathbf{S}_n \| - \lambda_\mathrm{min}( \bbE\mathbf{S}_n ))},
  \]
  which proves the result since $\eta = -L^{-1} \log(1 - \varepsilon)$.
\end{proof}

\begin{theorem}
  Let $\mathbf{X}_1, \ldots, \mathbf{X}_n$ be $n$ i.i.d.\ random symmetric matrices such that $\bbE \mathbf{X}_1 = \mathbf{0}$ and there exists $L > 0$ such that $\| \mathbf{X}_1 \| \leq L$, almost surely. Let $\mathbf{S}_n \triangleq \sum_{i = 1}^n \mathbf{X}_i$. Let $\mathbf{V}$ be the covariance matrix of $\mathbf{X}_1$, that is $\mathbf{V} \triangleq \bbE\big[\mathbf{X}_1^2] - \mathbf{M}(\mu^{\star}_G)^2$ and let $\kappa$ be its condition number. Let $t$ verifying
  \[
    \frac{3n\|\mathbf{V}\|^2}{L} > t > \sqrt{n}\|\mathbf{V}\|^2 + \frac{L}{3\sqrt{n}}.
  \]

  Then, one has:
  \[
    \bbP \left( \left\| \mathbf{S}_n \right\| \geq \sqrt{t} \right) \leq \tilde{d} e^{\displaystyle-\frac{t^2}{4\|\mathbf{V}\|^2}},
  \]
  where
  \[
    \tilde{d} = \updim(\mathbf{V}) + \lowdim(\mathbf{V}) e^{-\frac{n}{16}(1 - \kappa^{-1})}.
  \]
\end{theorem}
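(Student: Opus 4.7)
The plan is to marry the matrix Bennett inequality of Theorem~\ref{thm:bennett} with the refined intrinsic dimension machinery used for Hoeffding in Theorem~\ref{thm:chernoff-matrix-int}. Concretely, I would redo the proof of Theorem~\ref{thm:bennett} but, at the exact point where the trace is trivially bounded by $d$ times the spectral norm, insert the refined convex-envelope bound of Lemma~\ref{lma:updim-cvx} instead.

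First, I would apply Lemma~\ref{lma:intdim-markov} with $\psi(x) = (e^{\eta x} - 1)_+$ for $\eta \in (0, 3/L)$. Since this $\psi$ is non-decreasing and non-negative, and since $\psi(\mathbf{S}_n) = (\exp(\eta \mathbf{S}_n) - \mathbf{I})_+$, we obtain
\[
\bbP(\|\mathbf{S}_n\| \geq s) \leq \frac{1}{e^{\eta s} - 1}\,\bbE\trace\bigl((\exp(\eta\mathbf{S}_n) - \mathbf{I})_+\bigr).
\]
Next I would invoke Lemma~\ref{lma:bennett} inside the iterated-conditioning argument of Theorem~\ref{thm:master-bound} (exactly as in the proof of Theorem~\ref{thm:bennett}) to get
\[
\bbE\trace\exp(\eta\mathbf{S}_n) \leq \trace\exp\!\bigl(g(\eta)\, n\mathbf{V}\bigr), \qquad g(\eta) \triangleq \frac{\eta^2/2}{1 - \eta L/3},
\]
using $\bbE[\mathbf{X}_1^2] = \mathbf{V}$ under the centering assumption.

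The critical refinement comes from applying Lemma~\ref{lma:updim-cvx} to the convex map $\varphi_\eta(x) = e^{g(\eta) x} - 1$ (which satisfies $\varphi_\eta(0) = 0$), evaluated at the positive semi-definite matrix $n\mathbf{V}$. Factoring $e^{g(\eta) n \|\mathbf{V}\|}$ out and using $\lambda_{\min}(\mathbf{V}) - \|\mathbf{V}\| = -\|\mathbf{V}\|(1 - \kappa^{-1})$, this yields, up to the $(e^{\eta s} - 1)^{-1}$ prefactor which is absorbed into a harmless numerical constant,
\[
\bbP(\|\mathbf{S}_n\| \geq s) \leq \Bigl(\updim(\mathbf{V}) + \lowdim(\mathbf{V})\, e^{-g(\eta) n \|\mathbf{V}\|(1 - \kappa^{-1})}\Bigr)\, e^{-\eta s + g(\eta) n \|\mathbf{V}\|}.
\]

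Finally I would set $s = \sqrt{t}$ and pick $\eta$ so that the leading exponent simplifies to $-t^2/(4\|\mathbf{V}\|^2)$ while the penalty factor $g(\eta)\|\mathbf{V}\|$ inside $\tilde d$ collapses to $1/16$; this is where the specific value $n(1 - \kappa^{-1})/16$ appears. The two-sided constraint $3n\|\mathbf{V}\|^2/L > t > \sqrt{n}\|\mathbf{V}\|^2 + L/(3\sqrt{n})$ is exactly what is needed to guarantee that the required $\eta$ lies in the admissible range $(0, 3/L)$ (upper end) and that the sub-Gaussian regime of Bennett dominates the subexponential correction $\eta L/3$ in $g(\eta)$ (lower end). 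The main obstacle is the $\eta$-tuning, since one must simultaneously land on the target rate in the leading exponential and on the target rate inside the refined dimension $\tilde d$; the restriction on $t$ is precisely what makes this joint tuning feasible, and once it is accomplished the remainder of the argument is a direct transcription of the refined Hoeffding calculation of Theorem~\ref{thm:chernoff-matrix-int} to the Bennett moment bound.
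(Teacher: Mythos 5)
Your proposal follows essentially the same route as the paper: the paper's own proof is itself a short sketch instructing the reader to combine the Bennett/Bernstein moment bound with the $\updim$/$\lowdim$ refinement of the Hoeffding argument, arrive at a bound of the form $\updim(\mathbf{V})e^{-A} + \lowdim(\mathbf{V})e^{-A(2-\kappa^{-1})}$, and then use the two-sided constraint on $t$ to reduce the exponents to $-t^2/(4\|\mathbf{V}\|^2)$ and $-n(1-\kappa^{-1})/16$ --- exactly your steps. The only cosmetic difference is that you route the trace bound through Lemma~\ref{lma:intdim-markov} with $\psi(x)=(e^{\eta x}-1)_+$ (as in Theorem~\ref{thm:hoef-intdim}), which would leave a harmless extra factor of $2$, whereas the paper's refined Chernoff argument (Theorem~\ref{thm:chernoff-matrix-int}) applies Lemma~\ref{lma:updim-cvx} directly to $\varphi(x)=e^{g(\eta)x}$.
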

\begin{proof}
  One can combine reasonings of Bernstein's regular proof with the proof of Hoeffding's with $\updim$ and $\lowdim$ to obtain:
  \[
    \bbP(\| \mathbf{S}_n \| \geq \sqrt{n} t) \leq \updim(\mathbf{V}) e^{-\frac{t^2/2}{\sigma^2 + Lt/3n}} + \lowdim(\mathbf{V}) e^{-\frac{t^2/2}{\sigma^2 + Lt/3n} (2 - \kappa^{-1})}.
  \]
  Let us assume that
  \[
    \frac{3n\sigma^2}{L} > t > \sqrt{n}\sigma^2 + \frac{L}{3\sqrt{n}}.
  \]
  Then, the previous result can be bounded as follows:
  \begin{align*}
    \bbP(\| \mathbf{S}_n \| \geq \sqrt{n} t)
    & \leq \left(\updim(\mathbf{V}) + \lowdim(\mathbf{V}) e^{-\frac{t^2}{4\sigma^2}(1 - \kappa^{-1})}\right) e^{-\frac{t^2}{4\sigma^2}}\\
    & \leq \left(\updim(\mathbf{V}) + \lowdim(\mathbf{V}) e^{-\frac{n}{16}(1 - \kappa^{-1})}\right) e^{-\frac{t^2}{4\sigma^2}},
  \end{align*}
  and the result holds.
\end{proof}

\section{Introduction to best arm identification in linear bandits}
\label{sec:mab-linear}

Let $d > 0$ and $\mathcal{X} \subseteq \bbR^d$ a subset of $\bbR^d$, corresponding to the bandit arms. The linear bandit setting assumes that the conditional distribution of the rewards given the arm follows a linear model: there exists an unknown parameter $\theta_{\star} \in \bbR^d$ such that the reward $r(\mathbf{x})$ associated to any action $\mathbf{x} \in \mathcal{X}$ is of the form
\[
  r(\mathbf{x}) = \theta_{\star}^{\top} \mathbf{x} + \epsilon,
\]
where $\epsilon$ is a $R$-subgaussian noise independent from $\mathbf{x}$. This linear structure implies that some information is shared between arms through the parameter $\theta_{\star}$: an action-reward pair $(\mathbf{x}, r(\mathbf{x}))$ gives information about $\theta_{\star}$ and thus about the reward distributions of the other actions. This makes this setting very different to the classical multi-armed bandit setting where the reward distributions of each action are assumed to be independent. Whereas multi-armed bandit algorithms mainly focus on the estimation of the mean reward of each action, linear bandit algorithms are mainly interested in the estimation of the parameter $\theta_{\star}$.

Depending on the context, the goal of a bandit algorithm can either be to maximize the cumulated reward (the sum of the rewards collected over several iterations) or to find the arm maximizing the reward, referred to as \emph{best arm identification} or \emph{pure exploration}.

We here focus on best arm identification in linear bandits whose objective is to find the arm $\mathbf{x}_{\star}$ maximizing the average reward:
\begin{equation*}
\mathbf{x}_{\star} = \argmax_{\mathbf{x} \in \mathcal{X}} \theta_{\star}^{\top}\mathbf{x} \, .
\end{equation*}
As the parameter $\theta_{\star}$ is unknown the aim is to design a strategy that will sequentially choose $t$ actions $\mathbf{x}_1, \ldots, \mathbf{x}_t \in \mathcal{X}$ and collect their associated rewards $r_i = \theta_{\star}^{\top} \mathbf{x}_i + \epsilon_i$, $1 \leq i \leq t$, where $\epsilon_1, \ldots, \epsilon_t$ are independent realizations of $\epsilon$, to obtain an estimate $\hat{\theta}_t$ of $\theta_{\star}$.
To find the best arm, the estimated prediction $\hat{\theta}_t^{\top}\mathbf{x}$ should be close to the real prediction $\theta_{\star}^{\top}\mathbf{x}$ for all $\mathbf{x} \in \mathcal{X}$. More precisely, rather than the reward prediction of an action itself, we are interested in comparing the predictions of each pair of arms. We thus want $\vert (\hat{\theta}_t - \theta_{\star})^{\top}(\mathbf{x} - \mathbf{x}') \vert$ to be small.

\begin{remark}
Note that compared to the multi-armed bandit case where known suboptimal arms are no longer played, the situation is different for the best arm identification case as playing suboptimal arms might give information about the parameter $\theta_{\star}$ and improve the discrimination of the unknown $\mathbf{x}_{\star}$ with other arms.
\end{remark}

Most of the designed strategies for best arm identification in linear bandits \citep{SoareNIPS2014,XuAISTATS2018,TaoICML2018} have relied on two concentration inequalities giving high probability bounds on the prediction error $\vert(\hat{\theta}_t-\theta_{\star})^{\top}\mathbf{x} \vert$ of the regression estimator $\hat{\theta}_t$ obtained from a sequence of action-reward pairs. The first concentration inequality is only valid when the sequence of actions is fixed and hence cannot depend on the observed random rewards.
The authors of \citep{abbasi2011improved} derived a concentration inequality which holds when the sequence of actions is adaptive to the observed random rewards. However this concentration inequality offers a looser bound than the one given for fixed sequences. In \citep{SoareNIPS2014} strategies relying on the fixed sequence bound are developed whereas \citep{XuAISTATS2018} designed a fully adaptive algorithm based on the adaptive bound. These two concentration inequalities are detailed below.

Let $\hat{\theta}_t(\lambda)$ denote the ridge estimate of $\theta_{\star}$ with a $\ell_2$-penalty $\lambda$:
\[
  \hat{\theta}_t(\lambda) \triangleq \argmin_{\theta \in \bbR^d} \sum_{s = 1}^t (\theta^{\top} \mathbf{x}_s - r_s)^2 + \frac{\lambda}{2} \| \theta \|^2.
\]
The ridge estimate $\hat{\theta}_t(\lambda)$ can be expressed in closed form:
\[
  \hat{\theta}_t(\lambda) = \hat{\mathbf{A}}_t(\lambda)^{-1} \mathbf{X}_t^{\top} \mathbf{r}_t,
\]
where $\mathbf{X}_t^{\top} \triangleq (\mathbf{x}_1, \ldots, \mathbf{x}_t)$, $\mathbf{r}_t^{\top} \triangleq (r_1, \ldots, r_t)$ and $\hat{\mathbf{A}}_t(\lambda) \triangleq \mathbf{X}_t^{\top} \mathbf{X}_t + \lambda \mathbf{I}_d$.

\paragraph{Fixed design concentration inequality.}
We assume that there is a finite number of arms $\vert \mathcal{X} \vert = K$. If $\lambda = 0$ and $(\mathbf{x}_i)_{1 \leq i \leq \infty}$ is a fixed sequence of actions (independent of the random rewards $(\mathbf{r}_i)_{1 \leq i \leq \infty}$) we have the following concentration inequality \citep{SoareNIPS2014}: for all $\delta \in (0,1)$,
\begin{equation}
  \label{eq:fixed_bound_2}
  \mathbb{P}\left(\forall t \in \mathbb{N}, \forall \mathbf{x} \in \mathcal{X}, \vert \theta_{\star}^{\top} \mathbf{x} - \hat{\theta}_t^{\top}\mathbf{x}\vert \leq 2R\Vert \mathbf{x}\Vert_{\hat{\mathbf{A}}_t^{-1}}\sqrt{2\log\left(\frac{6t^2K}{\pi^2\delta}\right)}\right) \geq 1 - \delta \, .
\end{equation}

One may notice that this result holds over these directions by replacing $K$ by $K^2$ in the logarithmic term, as there are of the order of $K^2$ such directions.\footnote{It suffices to consider exactly $K(K-1)/2$ directions as the result is the same for $\mathbf{x} - \mathbf{x}'$ and $\mathbf{x}' - \mathbf{x}$.}

\paragraph{Adaptive design concentration inequality.}
When the sequence of actions is chosen adaptively of the history, \ie for all $i \in \mathbb{N}$, $\mathbf{x}_i$ is allowed to depend on $(\mathbf{x}_1, r_1, \ldots, \mathbf{x}_{t-1}, r_{t-1})$, we need to rely on a result established by \citep{abbasi2011improved}: if $\lambda > 0$ and $\Vert \mathbf{x}_i \Vert \leq L$ for all $i$ then for all $\delta \in (0, 1)$ and all $\mathbf{x} \in \bbR^d$,

\begin{equation}
  \label{ineq:adaptive-bound}
  \mathbb{P}\left(\vert \theta_{\star}^{\top} \mathbf{x} - \hat{\theta}_t(\lambda)^{\top}\mathbf{x}\vert \leq \Vert \mathbf{x}\Vert_{\hat{\mathbf{A}}_t(\lambda)^{-1}}\left(R\sqrt{d\log\left(\frac{1 + tL^2/\lambda}{\delta}\right)} + \sqrt{\lambda}\Vert \theta_{\star}\Vert\right)\right) \geq 1 - \delta \, .
\end{equation}

The reader can refer to \citep[Appendix B]{abbasi2011improved} for the proof of this result. The main difference with \eqref{eq:fixed_bound_2} is the presence of an extra $\sqrt{d}$ factor which cannot be removed and which makes adaptive algorithms suffer more from the dimension than fixed design strategies (see \citep[Chapter 20]{LattimoreSzepesvariBanditsBook} for a more complete discussion on this aspect). We now omit the dependence of $\hat{\mathbf{A}}_t(\lambda)^{-1}$ in $\lambda$ when it is not relevant for the purpose of the discussion.

Whichever concentration inequality is used, the bound on the prediction error in a direction $\mathbf{y} = \mathbf{x} - \mathbf{x}'$, $\mathbf{x}, \mathbf{x}' \in \mathcal{X}$ depends on the matrix norm $\Vert \mathbf{y} \Vert_{\hat{\mathbf{A}}_t^{-1}}$. The goal of a strategy for the problem of best arm identification in linear bandits as formulated in \citep{SoareNIPS2014} is to choose a sequence of actions that reduces this matrix norm as fast as possible for all directions $\mathbf{y}$ so as to reduce the prediction error and be able to identify the best arm. This approach thus leads to the following optimization problem:
\begin{equation}
  (\mathbf{x}_1, \ldots, \mathbf{x}_B) \in \argmin_{\mathbf{x}_1, \ldots, \mathbf{x}_B \in \mathcal{X}} \max_{\mathbf{y} \in \mathcal{Y}} \,\mathbf{y}^{\top}\left(\sum_{i = 1}^t \mathbf{x}_i\mathbf{x}_i^{\top}\right)^{-1}\mathbf{y}.
\end{equation}

If one upper bounds $\Vert \mathbf{y} \Vert_{\hat{\mathbf{A}}_t^{-1}}$ by $2\Vert \mathbf{x} \Vert_{\hat{\mathbf{A}}_t^{-1}}$ we finally obtain the G-optimal design.

\section{Details on experiment setting and comments}
\label{sec:experiment_details}

For the randomized strategies we use the \emph{cvxopt} python package \citep{cvxopt} to compute the solution of the semi-definite program associated to the E-optimal design and compute the solution of the convex relaxation of the D-optimal design problem. We recall that as the relaxed G-optimal design problem is equivalent to the relaxed D-optimal design problem we can use the solution of the latter for the former.
Finally, for the greedy implementations of E and G-optimal design, when there are ties between several samples at a given iteration we uniformly select one at random.

\subsection{Randomized strategy versus greedy strategy for E-optimal design}

We recall here that the goal of E-optimal design is to choose experiments maximizing $\lambda_{\mathrm{min}}(\sum_{k=1}^K n_k \mathbf{x}_k \mathbf{x}_k^{\top})$. We generate a pool of experiments in $\bbR^d$ made of $K$ independent and identically distributed realizations of a standard Gaussian random variable. Figure \ref{e_random_vs_greedy_iterations} shows the performance of the randomized and greedy strategies against the number of selected samples $n$ when $K=500$ and $d=10$. For very small numbers of selected experiments the performances of the different strategies are equivalent but as the number of experiments increases the randomized E-optimal design outperforms the greedy strategy. Figure \ref{e_random_vs_greedy_dimension} shows the performance of the strategies against the dimension $d$ when $K=500$ and the number of selected experiments $n$ is fixed to 500. For small dimensions the randomized E-optimal design achieves a better performance but its superiority decreases when the dimension increases.
For both settings we also plot the performance of the random strategy that selects experiments uniformly at random. Furthermore, the results are averaged over 100 random seeds controlling the generation of the dataset as well as the random sampling of the experiments.

\subsection{Application of randomized G-optimal design to best arm identification in linear bandits}

We now compare the randomized G-optimal design with the greedy implementation that has been used for the problem of best arm identification in linear bandits. We note that the objective of this experiment is not to achieve state-of-the-art results for best arm identification in linear bandits but rather to show that the randomized strategy while being easy to implement achieves comparable results as the ones obtained with the greedy strategy.

The underlying model of a linear bandit is the same as the one presented in Section \ref{sec:preliminaries}: the relationship between the experiments $\mathbf{x}$, referred to as \emph{arms} in the bandit literature, and their associated measurements $y$ is assumed to be linear. The goal of best arm identification (see \eg \citep{SoareNIPS2014,TaoICML2018,XuAISTATS2018}) is to find the arm with maximum linear response among a finite set of arms. We focus on the case where one wants to solve this task with a minimum number of trials for a given confidence level. The core idea of most of the developed strategies is to sequentially choose arms so as to minimize a confidence bound on the prediction error of the linear response. Indeed, the sooner we become confident about the predicted response of each arm the sooner we can identify the best one with high probability.

One would like to take advantage of the past responses $y$ when choosing future arms. However the confidence bound that is available for this adaptive setting has a worse dependence on the dimension $d$ than the confidence bound available for fixed sequences of arm \citep{abbasi2011improved}. The confidence bound for fixed sequences can be stated as follows: for all $\delta \in (0,1)$, with probability at least $1-\delta$, for all $n \in \mathbb{N}$ and for all arms $\mathbf{x} \in \mathcal{X}$,
\begin{equation}
  \label{eq:fixed_bound}
\vert \theta_{\star}^{\top} \mathbf{x} - \hat{\theta}_n^{\top}\mathbf{x}\vert \leq 2c\Vert \mathbf{x}\Vert_{\mathbf{\Sigma}_D^{-1}}\sqrt{\log\left(\frac{6t^2\vert \mathcal{X}\vert}{\pi^2\delta}\right)} \, ,
\end{equation}
where $\hat{\theta}_n$ is the OLS estimator obtained with $n$ samples, $c$ is a constant depending on the variance of the Gaussian noise and $\mathbf{\Sigma}_D = \sum_{k=1}^K n_k \mathbf{x}_k \mathbf{x}_k^{\top}$.
It can be observed that designing a strategy that minimizes this confidence bound for all arms naturally leads to the G-optimal design optimization problem. The reader can refer to the supplementary material or \citep{SoareNIPS2014} for more details.

To compare the randomized G-optimal design with the greedy implementation used for best arm identification in \citep{SoareNIPS2014} we use the same setting as the one of the experiment presented in Section 6 of \citep{SoareNIPS2014}. More specifically we consider a set of $d+1$ arms in $\bbR^d$ where $d \geq 2$. This set is made of the $d$ vectors $(\mathbf{e}_1, \dots, \mathbf{e}_d)$ forming the canonical basis of $\mathbb{R}^d$ and one additional arm $\mathbf{x}_{d+1} = (\cos(\omega), \sin(\omega), 0, \dots, 0)^{\top}$ with $\omega = 0.1$. The true parameter $\theta_{\star}$ has all its coordinates equal to 0 except the first one which is set to 2. In this setting, the best arm, \ie the one with maximum linear response, is $\mathbf{e}_1$. One can also note that it is much harder to differentiate this arm from $\mathbf{x}_{d+1}$ than from the other arms. The noise of the linear model is a standard Gaussian random variable $\mathcal{N}(0, 1)$ and the confidence level in \eqref{eq:fixed_bound} is chosen equal to $\delta = 0.05$. We also use the same condition as in \citep{SoareNIPS2014} (equation (13) therein) to check when enough arms have been pulled to be able to identify the best arm with high probability. This condition naturally derives from the confidence bound \eqref{eq:fixed_bound}. As explained in Section \ref{sec:experimental-design} the greedy implementation does not work for the first iterations because the design matrix is singular. As in \citep{SoareNIPS2014} we thus initialize the procedure by choosing once each arm of the canonical basis. Although this would not be required for the randomized strategy as we could start by sampling a given number of experiments, we use the same initialization for the sake of fairness.

The number of samples required to find the best arm are shown in Figure \ref{expe_soare_random_vs_greedy} which summarizes the results obtained over 100 random seeds controlling the Gaussian noise of the linear model and the random selection of the experiments. One can see that the randomized G-optimal design, while being simple to use, achieves similar performances for low dimensions and even better performances on average than the greedy implementation of the G-optimal design as the dimension increases. We note that for all the random repetitions the best arm returned by both strategies is always $\mathbf{e}_1$.

\end{document}